\newtheorem{theorem}{Theorem}
\newcommand{\ds}{\displaystyle}
\newcommand{\mbf}[1]{\mathbf{#1}}
\theoremstyle{plain}
\newtheorem{thm}{Theorem}[section]
\newtheorem{propo}[thm]{Proposition}
\newtheorem{lemma}[thm]{Lemma}
\newtheorem{cor}[thm]{Corollary}
\theoremstyle{definition}
\newtheorem{defn}[thm]{Definition}
\newtheorem*{defn*}{Definition}
\newtheorem{example}[thm]{Example}
\newtheorem*{example*}{Example}
\newtheorem{rmk}[thm]{Remark}
\newtheorem{ntn}[thm]{Notation}
\newcommand{\ideal}[1]{\mathfrak{#1}}
\newcommand{\m}{\ideal{m}}
\newcommand{\q}{\ideal{q}}
\DeclareMathOperator{\sdepth}{s \ depth}
\DeclareMathOperator{\cdepth}{c \ depth}
\DeclareMathOperator{\kdepth}{k \ depth}
\DeclareMathOperator{\rdepth}{r \ depth}
\DeclareMathOperator{\leftdepth}{left \ depth}
\DeclareMathOperator{\rightdepth}{right \ depth}
\DeclareMathOperator{\gr}{gr}
\DeclareMathOperator{\cgr}{c \ gr}
\DeclareMathOperator{\kgr}{k \ gr}
\DeclareMathOperator{\rgr}{r \ gr}
\DeclareMathOperator{\Ass}{Ass}
\DeclareMathOperator{\wAss}{\widetilde{Ass}}
\DeclareMathOperator{\Spec}{Spec}
\DeclareMathOperator{\depth}{depth}
\DeclareMathOperator{\Ext}{Ext}
\DeclareMathOperator{\Hom}{Hom}
\DeclareMathOperator{\sK}{sK}
\DeclareMathOperator{\K}{K}
\DeclareMathOperator{\ann}{ann}
\renewcommand{\phi}{\varphi}
\title[Depth and Associated Primes in the Perfect Closure $R^\infty$]{Generalized Depth and Associated Primes in the Perfect Closure $R^\infty$}
\author{George Whelan}
\address{George Whelan, Department of Arts and Sciences, George Mason University Korea, Incheon, Korea, $21985$}
\email{gwhelan@gmu.edu}
\date{}
\begin{document}
\maketitle

\begin{abstract}
For a reduced Noetherian ring $R$ of characteristic $p > 0$, in this paper we discuss an extension of $R$ called its perfect closure $R^\infty$.  This extension contains all $p^e$-th roots of elements of $R$, and is usually non-Noetherian.  We first define the generalized notions of associated primes of a module over a non-Noetherian ring.  Then for any $R$-module $M$, we state a correspondence between certain generalized prime ideals of $(R^\infty \otimes_R M)/N$ over $R^\infty$, and the union of associated prime ideals of $F^e(M)/N_e$ as $e \in \mathbb{N}$ varies.  Here $F$ refers to the Frobenius functor, and in the paper we define an $F$-sequence of submodules $\lbrace N_e \rbrace \subseteq \lbrace F^e(M) \rbrace$ as $e$ varies, while $\underrightarrow{\lim} \ N_e = N$.  Under the further assumptions that $M$ is finitely generated and $(R,\mathfrak{m})$ is an $F$-pure local ring, we then show that $\depth_R(F^e(M))$ is constant for $e \gg 0$, and we call this value the stabilizing depth, or $\sdepth_R(M)$.  Lastly, we turn to non-Noetherian measures of the depth of $R^\infty \otimes_R M$ over $R^\infty$, which generalize as well.  Two of these values are the $\kdepth$ and the $\cdepth$, and we show $\kdepth_{R^\infty} (R^\infty \otimes_R M) = \sdepth_R (M) \geq \cdepth_{R^\infty} (R^\infty \otimes_R M)$, while all three values are equal under certain assumptions.
\end{abstract}

\section{Introduction}

\indent \indent Let $R$ be a reduced commutative Noetherian ring of characteristic $p>0$ with unity.  Its perfect closure $R^\infty$ is the extension formed by adjoining all $p^e$-th roots of elements of $R$.  As we discuss in sections~\ref{sec:PerfClos} and~\ref{sec:FF} respectively, $R^\infty$ is constructed as a direct limit of extensions of $R$, and $R^\infty \otimes_R M$ is a direct limit of $F^e(M)$ for an $R$-module $M$, where $F$ denotes the Frobenius functor.  Over these iterations $F^e(M)$, it is natural to investigate what happens to fundamental notions such as their associated primes in $R$, and if $(R,\mathfrak{m})$ is local, the depth of $F^e(M)$.  However, if we compare any such findings to the same measures of $R^\infty \otimes_R M$ as an $R^\infty$-module, the investigation is immediately complicated by the fact that $R^\infty$ is almost always a non-Noetherian ring.  Both of these concepts are then not well defined because characterizations which are equivalent in the Noetherian context become distinct definitions.

\indent Over a non-Noetherian ring $S$, for a module $M$ the definition of associated primes of a module generalizes to the additional concepts of weakly associated primes, strong Krull primes, and Krull primes, which have the respective containments $\Ass_S(M) \subseteq \wAss_{S} (M) \subseteq \sK_{S} (M) \subseteq \K_{S} (M)$.  For an overview of this topic see~\cite{IrRu-ass}, and we define them in section~\ref{sec:GenAss}.

\indent For $R$ and $M$ as above, in order to state the behavior of associated prime ideals of $F^e(M)$ in full generality as $e$ grows, we must define in section~\ref{sec:F-seq} an $F$-sequence of sub-modules $\lbrace  N_e \rbrace \subseteq \lbrace F^e(M) \rbrace$.  The direct limit $N = \underrightarrow{\lim} \ N_e$ of such modules is a sub-module of $R^\infty \otimes_R M$.  This notion is a generalization of $f$-sequences of ideals as discussed in~\cite{NoSh.alpha}, and which we address in section~\ref{sec:f-Seq}.  Then using the fact that $\Spec(R)$ and $\Spec(R^\infty)$ are homeomorphic topological spaces under the contraction map $\phi : \Spec(R^\infty) \rightarrow \Spec(R)$, as shown in section~\ref{sec:PerfClos}, our main result in section~\ref{sec:GenAssResults} states that as $e$ varies:

\begin{align*}
\phi^{-1} \bigg( \bigcup \Ass_R \big( F^e(M)/N_e \big) \bigg) &= \wAss_{R^\infty} \big( (R^\infty \otimes_R M)/N \big) \\
&= \sK_{R^\infty} \big( (R^\infty \otimes_R M)/N \big).
\end{align*}

\noindent where $\lbrace N_e \rbrace$ is an $F$-sequence of sub-modules of $\lbrace F^e(M) \rbrace$.  An analogous result also holds for $f$-sequences of ideals.

\indent Regarding non-Noetherian depths, see~\cite{Ho-grade} and~\cite{Fb-ThyGr} for a discussion of these values.  We define them in section~\ref{sec:GenDepth}, and for a local ring $(S,\mathfrak{n})$ and $S$-module $M$, we have $\rdepth_{S} (M) \geq \kdepth_{S} (M) \geq \cdepth_{S} (M)$.  Again, these are all equal if $S$ is Noetherian and $M$ is finitely generated.

\indent Now letting $M$ be a finitely generated module over a Noetherian local $(R, \mathfrak{m})$, in general $\depth_R(M) \neq \depth_R \big( F^e(M) \big)$ as $e$ grows, as in example~\ref{example:DecrDepth}.  Under the assumption that $R$ is an $F$-pure ring, which we discuss in section~\ref{sec:F-pure}, we find that this value is non-increasing, and therefore must eventually be constant for $e \gg 0$.  In section~\ref{sec:SDepth} we call this value the the stabilizing depth:

\begin{center}
$\sdepth_R (M) := \depth_R \big( F^e(M) \big) \ \text{for } \ e \gg 0$.
\end{center} 

\indent Since this measure of $M$ is defined over iterations $F^e(M)$, in section~\ref{sec:DepthTypeComp} we compare it to the non-Noetherian depth measures of the limit $\underrightarrow{\lim} \ F^e(M) = R^\infty \otimes_R M$ as an $R^\infty$-module.  We find that in general:

\begin{center}
$\kdepth_{R^\infty} (R^\infty \otimes_R M) = \sdepth_R (M) \geq \cdepth_{R^\infty} (R^\infty \otimes_R M)$.
\end{center}

\indent Under the assumption that $R$ satisfies countable prime avoidance (see~\cite[Lemma 13.2]{LeWe-CMR}, \cite[Lemma 3]{BurchCPA}, \cite{RsPv-BCT}, \cite{HHexponent}), or if a specific application of the prime avoidance lemma is satisfied, we have:

\begin{center}
$\kdepth_{R^\infty} (R^\infty \otimes_R M) = \sdepth_R (M) = \cdepth_{R^\infty} (R^\infty \otimes_R M)$.
\end{center}

\subsection{Outline}

\indent \indent Section~\ref{sec:Prelims} consists of preliminary discussions of relevant topics to this paper: The perfect closure $R^\infty$, the Frobenius functor, Frobenius sequences of ideals and modules, non-Noetherian associated prime ideals, non-Noetherian depth, and $F$-pure rings. In section~\ref{sec:GenAssResults} we discuss our results regarding non-Noetherian associated prime ideals of $R^\infty \otimes_R M$ over $R^\infty$ for an $R$-module $M$.  In section~\ref{sec:F-pureResults} we assume $F$-purity for our rings $R$, we define stabilizing depth for a finitely generated $R$-module $M$, and we compare $\sdepth_R(M)$ to other non-Noetherian depth measures over $R^\infty$.

\section{Preliminaries}\label{sec:Prelims}

\subsection{Notation and conventions}\label{sec:NotCon}

\indent \indent All rings are commutative with unity, reduced, and have characteristic $p > 0$.  The reduced assumption for a ring $S$ means $\sqrt{0} = 0$ in $S$.  We write $q$ to refer to $p^e$ for $e \in \mathbb{N}$, and occasionally $q' = p^{e'}$.  $S$ will refer to an arbitrary ring, $R$ will refer to an arbitrary Noetherian ring, and if either is local we will respectively write $(S,\mathfrak{n})$ or $(R, \mathfrak{m})$.  If $\bf x$ is a finite sequence of elements in a ring $S$, $\bf|x|$ refers to the cardinality of the set $\lbrace \bf x \rbrace$.  For a ring $S$ the map $f:S \rightarrow S$ refers to the Frobenius endomorphism $f(s) = s^p$, while $f^e$ refers to the $e^{th}$ iteration of $f$ for $e \in \mathbb{N}$.  If $M$ is an $S$-module, $\Ass_S(M)$ refers to the associated prime ideals of $M$ in $\Spec(S)$, that is, prime ideals which are annihilators of elements of $M$.  If $I \subset S$ is an ideal, $I^{[q]} := \lbrace (x^q) \ | \ x \in I \rbrace$ is the $q$-th Frobenius power of $I$, $I^{[\frac{1}{q}]} := \lbrace (x^{\frac{1}{q}}) \ | \ x \in I, \ \text{and} \ x^{\frac{1}{q}} \in S \rbrace$, and $I^F$ refers to the Frobenius closure $I^F := \lbrace x \ | \ x^q \in I^{[q]} \ \text{for some} \ q \rbrace$ (alternately, $I^F := I R^\infty \cap R$ for $R^\infty$ defined below).  If $I=I^F$, it is Frobenius closed.

\subsection{The perfect closure of a reduced ring of characteristic $p > 0$}\label{sec:PerfClos}

\indent \indent A ring $S$ of characteristic $p > 0$ is \emph{perfect} if the Frobenius map is an automorphism on $S$.  Let $R$ be a reduced commutative Noetherian ring, whereby $f:R \rightarrow R$ is injective since there exists no non-zero $x \in R$ such that $x^p = 0$, but the map is not surjective in general.  The \emph{perfect closure} $R^\infty$ is an extension of $R$ which will contain all $q$-th roots of all elements of $R$.  Hence $f : R^\infty \rightarrow R^\infty$ will be an automorphism.  Greenberg showed that such a ring always exists, regardless of whether a ring is reduced~\cite{Gr-perfect}.  For an explicit construction which also addresses the non-reduced case see \cite{DJo-BiExt}, and a description in \cite{NoSh.alpha}.

\indent We include an explicit construction of $R^\infty$ using our reduced hypothesis for $R$.  Since $R$ is Noetherian and reduced, it has only finitely many associated prime ideals $P_1, \ldots P_n$, and $P_1 \cap \ldots \cap P_n = \sqrt{0} = 0$.  By virtue of $R$ being reduced, the natural map $R \rightarrow \prod_{i=1}^n(R/P_i)$ is injective.  Moreover, the total ring of quotients is $K = q(R) = \prod_{i=1}^n q(R/P_i)$.  For each $i$, let $K_i = q(R/P_i)$, the fraction field of $R/P_i$, and let $\overline{K}_i$ be its algebraic closure.  Let $\bar{K} = \prod_{i=1}^n \overline{K}_i$.  Let $R^{\frac{1}{q}} := \lbrace x \in \bar{K} \ | \ x^q \in R \rbrace$.  Then $R^\infty := \underrightarrow{\lim} \ R^{\frac{1}{q}} = \bigcup R^\frac{1}{q}$.

\indent $R^\infty$ will rarely be Noetherian.  For example if $R := \mathbb{F}_p[x]$, then $R^\infty = \mathbb{F}_p[x, x^\frac{1}{p}, x^\frac{1}{p^2}, \ldots]$.  Specifically, $R^\infty$ will be Noetherian if and only if $R$ is a direct product of finitely many fields \cite[Theorem 6.3]{NoSh.alpha}.  However $R$ and $R^\infty$ always share some key similarities which we will discuss throughout this paper.  

\indent In \cite[Theorem $6.1, \ i)$]{NoSh.alpha}, the authors showed that there exists an order isomorphism between $\Spec(R)$ and $\Spec(R^\infty)$.  That is, the contraction map $\phi :  \Spec(R^{\infty}) \rightarrow \Spec(R)$ is an order-preserving bijection.  We show that the two spectra actually share a stronger relationship in that they are homemorphic topological spaces.

\begin{theorem}
Let $R$ be a reduced Noetherian ring of characteristic $p > 0$, and let $R^\infty$ be its perfect closure.  Then the contraction map $ \phi : \Spec(R^\infty) \rightarrow \Spec(R)$ is a homeomorphism with respect to the Zariski topology.
\end{theorem}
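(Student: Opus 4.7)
The plan is to combine two observations: (i) for any ring homomorphism, the induced map on spectra is automatically continuous with respect to the Zariski topology, so together with the bijectivity already cited from \cite[Theorem 6.1]{NoSh.alpha} it will suffice to prove that $\phi$ is a closed map; and (ii) the extension $R \hookrightarrow R^\infty$ is integral, since every $y \in R^\infty$ lies in some $R^{1/q}$ and therefore satisfies the monic relation $T^q - y^q = 0$ with $y^q \in R$. Writing $V(I) = \{Q \in \Spec(R^\infty) : Q \supseteq I\}$ for an ideal $I \subseteq R^\infty$, I would like to show
\[
\phi(V(I)) = V(I \cap R),
\]
which is the only nontrivial ingredient.

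The inclusion $\phi(V(I)) \subseteq V(I \cap R)$ is immediate: if $Q \supseteq I$, then $Q \cap R \supseteq I \cap R$. For the reverse inclusion I would take $P \in V(I \cap R)$ and pass to the induced extension $R/(I \cap R) \hookrightarrow R^\infty/I$. This map is injective because $I \cap R$ is the contraction of $I$, and it is again integral because quotients of integral extensions remain integral (a monic equation for $s \in R^\infty$ over $R$ reduces modulo $I$ to a monic equation for $\bar{s}$ over $R/(I \cap R)$). Applying the lying-over theorem to this extension produces a prime $\bar{Q} \subseteq R^\infty/I$ whose contraction to $R/(I \cap R)$ equals $P/(I \cap R)$. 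Lifting $\bar{Q}$ back through the quotient $R^\infty \onto R^\infty/I$ yields a prime $Q \subseteq R^\infty$ with $Q \supseteq I$ and $\phi(Q) = Q \cap R = P$.

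Once this equality is established, the conclusion follows at once: for every ideal $I \subseteq R^\infty$,
\[
(\phi^{-1})^{-1}\bigl(V(I)\bigr) = \phi\bigl(V(I)\bigr) = V(I \cap R)
\]
is closed in $\Spec(R)$, so the inverse of the bijection $\phi$ is continuous, completing the homeomorphism. The main obstacle is really only bookkeeping: verifying cleanly that lying-over can be applied to the quotient extension $R/(I \cap R) \hookrightarrow R^\infty/I$, which reduces to the standard observations that integrality and injectivity both descend to the quotient. No finite-generation or Noetherian hypothesis on $R^\infty$ is needed for this argument, since lying-over for integral extensions holds in full generality.
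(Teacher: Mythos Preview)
Your proof is correct and reaches the same target equality $\phi(V(I)) = V(I \cap R)$ as the paper, but the nontrivial inclusion is handled differently. The paper exploits the explicit description of the bijection $P \leftrightarrow P^\infty$: given $P \supseteq I \cap R$, it shows directly that the corresponding $P^\infty$ contains $I$ by taking any $x^{1/q} \in I$, noting $x \in I^{[q]}R^\infty \cap R \subseteq I \cap R \subseteq P \subseteq P^\infty$, and concluding $x^{1/q} \in P^\infty$ since $P^\infty$ is radical. Your argument instead recognizes $R \hookrightarrow R^\infty$ as an integral extension and invokes lying-over on the quotient $R/(I\cap R) \hookrightarrow R^\infty/I$. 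Your route is the standard proof that $\Spec$ of an integral extension is a closed map, so it buys generality and requires no computation specific to perfect closures; the paper's route is more self-contained and avoids importing the lying-over theorem, at the cost of a small element-level calculation with $q$-th roots.
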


\begin{proof} 
\indent $\phi$ is already known to be a bijection by \cite[Theorem $6.1, \ i)$]{NoSh.alpha}.  $\phi$ is always continuous, but we claim that $\phi^{-1} : \Spec(R) \rightarrow \Spec(R^\infty)$ is continuous as well.  Let $V(J) \subset \Spec(R^\infty)$ be a closed set for some ideal $J \subset R^\infty$.  We claim $\phi (V(J)) = V(J \cap R)$, and hence its pre-image under $\phi^{-1}$ is closed.  If $J \subseteq P^\infty$, then clearly $J \cap R \subseteq P^\infty \cap R = P$, whence  $P \in V(J \cap R)$.  Conversely, if $J \cap R \subseteq P$, fix $x^{\frac{1}{q}} \in J$.  Then $x \in (J^{[q]}R^\infty \cap R) \subseteq (J \cap R) \subseteq P = (P^\infty \cap R) \subseteq P^\infty$.  Hence $x^{\frac{1}{q}} \in P^\infty$, and $J \subseteq P^\infty$.
\end{proof}

\begin{ntn}
Due to this relationship, we will use the notation $P$ and $P^\infty$ to denote corresponding prime ideals in $R$ an $R^\infty$ respectively.
\end{ntn}

\subsection{The Frobenius functor}\label{sec:FF}

\indent \indent Let $R^{\frac{1}{f}}$ denote an $R$-$R$ bimodule, which is isomorphic to the ring $R^{\frac{1}{p}}$.  However, for $a^{\frac{1}{p}} \in R^{\frac{1}{f}}$ and $r,s \in R$, let the left and right actions by $R$ be $r \circ a^{\frac{1}{q}} \cdot s = r^{\frac{1}{q}} a^{\frac{1}{q}} s$.  Iterating, we have $R^{\frac{1}{f^e}} \cong R^{\frac{1}{q}}$, and left and right action by $R$ is $r \circ a^{\frac{1}{q}} \cdot s = r^{\frac{1}{q}} a^{\frac{1}{q}} s$.  $R^{\frac{1}{f^e}}$ is additionally a right $R^{\frac{1}{q'}}$-module for all $q' \leq q$ with action $a^{\frac{1}{q}} \cdot r^{\frac{1}{q'}} = a^{\frac{1}{q}} r^{\frac{1}{q'}}$, which is compatible with the right action by $R$ since $R \subseteq R^{\frac{1}{q}}$.

\begin{rmk}\label{rmk:stdconst}
We discuss the module $R^{\frac{1}{f}}$ rather than the standard $R^{f}$ or $R^F$ such as defined in \cite{BH} because under our construction the direct limit $\underrightarrow{\lim} \ F^e(M)$ will be an $R^\infty$-module.
\end{rmk}

\begin{defn}
 If $M$ is an $R$-module, the \emph{Frobenius functor} is a right exact functor in both the categories of right and left $R$-modules, defined by:

\begin{center}
$F(M) := R^{\frac{1}{f}} \otimes_R M$
\end{center}

\noindent and if $\phi : M \rightarrow N$ is an $R$-module map, then $F(\phi) : F(M)\rightarrow F(N)$ is the map:

\begin{center}
$F(\phi) := 1_{R^{1/f}} \otimes_R \phi$
\end{center}
\end{defn}

\begin{ntn} $m_e = \sum_{i=1}^n (s_i^{\frac{1}{q}} \otimes_R m_i)_e$ will refer to an arbitrary element of $F^e(M)$.  For example, let $\phi: M \rightarrow M$ be the identity map on an $R$-module $M$.  If $q < q'$, and if $\sum_{i=1}^n (s_i^{\frac{1}{q}} \otimes_R m_i)_e = m_e \in F^{e}(M)$, then its image under $F^{e'-e}(\phi)$ is $m_{e'} = \sum_{i=1}^n (s_i^{\frac{1}{q}} \otimes_R m_i)_{e'}$, where each $m_i \in M$ does not change, and each $s_i^{\frac{1}{q}} \in R^{\frac{1}{q}} \subseteq R^{\frac{1}{q'}}$ does not change.

\indent Similarly, $m_\infty$ will refer to elements of $R^\infty \otimes_R M$.
\end{ntn}

\indent $F(M)$ is an $R-R^{\frac{1}{p}}$ bi-module (and hence an $R-R$ bi-module) with left and right action (on a simple tensor):

\begin{align*}
r \ \circ \ m_1 \cdot s^\frac{1}{p} =& \ r \circ (a^{\frac{1}{p}} \otimes_R m)_1 \cdot s^{\frac{1}{p}} \\
= & \ (r \circ a^{\frac{1}{p}} \cdot s^{\frac{1}{p}} \otimes_R m)_1 \\
= & \ (r^{\frac{1}{p}} a^{\frac{1}{p}} s^{\frac{1}{p}} \otimes_R m)_1 \\
= & \ r^\frac{1}{p} m_1 s^\frac{1}{p} 
\end{align*}

\indent We now have a directed system $\big( F^e(M), (\psi_{e,e'} \otimes_R 1_M) \big)$, since we have $F^e(M) :=  (R^{\frac{1}{f^e}} \otimes_R M)$, where the maps $\psi_{e,e'} : R^{\frac{1}{f^{e}}} \hookrightarrow R^{\frac{1}{f^{e'}}}$ are embeddings.  Therefore,

\begin{center}
 $(R^\infty \otimes_R M) := \underrightarrow{\lim} \ F^e(M)$,
\end{center}

\noindent which is a right $R^\infty$-module (and hence a right $R \subseteq R^\infty$ module), whose action (on a simple tensor) is consistent with each $F^e(M)$: 

\begin{align*} 
m_\infty \cdot r^{\frac{1}{q'}} =& \ (a^{\frac{1}{q}} \otimes_R m)_{\infty} \cdot r^{\frac{1}{q'}} \\
=& \ (a^{\frac{1}{q}} \cdot r^{\frac{1}{q'}} \otimes_R m)_{\infty} \\ 
=& \ (a^{\frac{1}{q}} r^{\frac{1}{q'}} \otimes_R m)_{\infty} \\
=& \ m_\infty r^{\frac{1}{q'}} 
\end{align*}

\begin{rmk}\label{rmk:freakout}
Since the left action by $R$ on $F^e(M)$ is identical to simple left multiplication by the ring $R^{\frac{1}{q}}$, $F^e(M)$ will be finitely generated as a left $R$-module, even though it may not be finitely generated by right action , since $R$ may not be $F$-finite.
\end{rmk}

\begin{defn}
If $M, N$ are $R$-modules with $N \subseteq M$ the \emph{Frobenius closure} of $N$ in $M$ is defined $N^F_{M} := \lbrace m \in M \ | \ (1 \otimes_R m)_e \in N^{[q]} \subseteq F^e(M) \ \text{for} \ e \gg 0 \rbrace$.  Here $N^{[q]} := F^e(i)(N)$, where $i : N \rightarrow M$ is the embedding map, and $F^e(i)$ is its image under the $e$-th iteration of the Frobenius functor.
\end{defn}

\indent  Equivalently $N^F_{M} := \lbrace m \in M \ | \ \overline{(1 \otimes_R m)}_e = 0 \in F^e(M / N) \ \text{for} \ e \gg 0 \rbrace$.  Also note that in particular, $0^F_{M} := \lbrace m \in M \ | \ (1 \otimes_R m)_e = 0 \in F^e(M) \ \text{for} \ e \gg 0 \rbrace$.

\begin{rmk}\label{rmk:Zero}
\indent Note that since we are discussing a directed system, by definition $0 \in (R^\infty \otimes_R M)$ is precisely the image of the elements of $0^F_{F^e(M)}$ as $e$ varies, or $\underrightarrow{\lim} \ 0^F_{F^e(M)} = 0 \in R^\infty \otimes_R M$.  That is,

\begin{align*}
m_{e} \in 0^F_{F^e(M)} \ \text{for some} \ e & \Leftrightarrow m_{e'} = 0 \in F^{e'}(M) \ \text{for some} \ e \leq e' \\
& \Leftrightarrow m_{\infty} = 0 \ \text{in} \ R^\infty
\end{align*}

\end{rmk}

\subsection{Frobenius sequences}\label{sec:Frob-Seq}

\subsubsection{$f$-sequences of ideals}\label{sec:f-Seq}

\begin{defn}
In a reduced ring $S$ of characteristic $p > 0$, an \emph{$f$-sequence of ideals in $S$} is a descending chain of ideals $\lbrace J_e \rbrace$ such that $f^{-1}(J_{e+1}) = J_e$ for every $e$.  
\end{defn}

\begin{rmk}
We do not require for $S$ to be reduced for this definition, because as mentioned in~\cite[Remark $4.2, (ii)$]{NoSh.alpha}, $\sqrt{0} \subseteq J_e$ for all $e$.  However all rings in this paper are reduced, so we include this hypothesis in our definition.
\end{rmk}

\indent For an extensive discussion about the properties of $f$-sequences, see \cite{NoSh.alpha}.  We will not give the subject a similarly thorough treatment, however there are some features relevant to this paper.
  
\begin{rmk}\label{rmk:fseqfacts}
Let $S$ be a ring of characteristic $p > 0$, and let $\lbrace J_e \rbrace$ be an $f$-sequence in $S$.
\begin{enumerate}
\item For all $e$, $\Ass_S(S/J_e) \subseteq $ $\Ass_S(S/J_{e + 1})$ \cite[Remark $4.2, \ iv)$]{NoSh.alpha}.
\item  For all $e$, $J_e$ is Frobenius closed \cite[Remark $4.1 \ ii)$]{NoSh.alpha}.  Hence given an ideal $I \subset S$ we can view $\lbrace (I^{[q]})^F \rbrace$ as a minimal $f$-sequence, since any $f$-sequence which contians $I$ at some stage must therefore contain the entire sequence. 
\item $\sqrt{J_e} = \sqrt{J_{e'}}$ for all $e, e'$, or all ideals in the sequence have the same radical \cite[Remark $4.2, i)$]{NoSh.alpha}.
\item  For a Notherian ring $R$ there exists a bijective correspondence between $f$-sequences of $R$ and ideals in $R^\infty$ \cite[Corollary $3.2$]{NoSh.alpha}.  For an ideal $J \subset R^\infty$, this correspondence is given explicitly by $\Gamma \ : \ J \mapsto \lbrace J_e \rbrace$, where for every $e \in \mathbb{N}$, $J_e := \lbrace r \in R \ | \ r^{\frac{1}{q}} \in J \rbrace$.  Alternatively, we can say $J_e = f^e(J) \cap R$, and in particular note that $J_0 = J \cap R$.
\end{enumerate} 
\end{rmk}

\indent We include some examples of $f$-sequences.

\begin{example}\label{example:fseqs} Let $R$ be a Noetherian ring of characteristic $p > 0$.  The following sequences $\lbrace J_e \rbrace \subset R$ are $f$-sequences with corresponding ideals $J \subset R^\infty$.  In each example, for every $e \in \mathbb{N}$, let $J_e$ be as defined.
\begin{enumerate}
\item $ J_e  =  (I^{[q]})^F $, $J = I R^\infty$, where $I \subset R$ is any ideal. 
\item Letting  $R =  k[x,y]$, $ J_e  =  (x,y)^{[q]} = (x^q, y^q) $, $J = (x,y) R^\infty$, where $k$ is any field of characteristic $p > 0$.
\item Letting  $R =  k[x,y]$, $ J_e = (x, y^q) $, $J = (x, x^{\frac{1}{p}}, x^{\frac{1}{p^2}}, \ldots, y) R^\infty$.
\item $ J_e = P $ for some $P \in \Spec(R)$, $J = P^\infty$. 
\end{enumerate}
\end{example}

\indent Note that example $2$ is of the form $\lbrace I^{[q]} \rbrace$ for an $I \subset R$, but examples $1, 3$ and $4$ show that such a form does not characterize all $f$-sequences.  In general a sequence of Frobenius iterates $\lbrace I^{[q]} \rbrace$ for some $I \subset R$ will not yield an $f$-sequence since $I^{[q]}$ may not be Frobenius closed as $q$ varies.  However if $R$ is an $F$-pure ring, as we discuss below, all ideals are Frobenius closed, and hence $\lbrace I^{[q]} \rbrace$ will always be an $f$-sequence.

\indent Continuing with our assumption that $R$ is Noetherian, let $\lbrace J_e \rbrace$ be an $f$-sequence.  Fix $e$ and we have an ascending chain,

\begin{center}
$\ldots J_{e - 2} = f^{-2}(J_{e}) \cap R \ \supseteq \ J_{e - 1} = f^{-1}(J_{e}) \cap R \ \supseteq \  J_{e}$
\end{center}

\noindent which must therefore stabilize, usually after the zero-th term in the sequence $J_0$.  Suppose the chain stabilizes to some ideal $J$ for which $J = f^{-1}(J)$.  Unsurprisingly, $J$ is the radical ideal from \ref{rmk:fseqfacts}, fact $3$ above.

\begin{propo}
Let $R$ be a reduced Noetherian ring of characteristic $p > 0$, and Let $\lbrace J_e \rbrace_{e \in \mathbb{N}}$ be an $f$ sequence in $R$, and let let $\mathfrak{a} = \sqrt{J_e}$ for all $e$.  Let $J \subset R$ be the ideal such that the ascending chain

\begin{center}
$\ldots \supseteq J_{e-2} \supseteq J_{e - 1} \supseteq J_{e} \supset \ldots$
\end{center}

\noindent stabilizes, for $e \in \mathbb{N}$.  Then $J = \mathfrak{a}$.

\end{propo}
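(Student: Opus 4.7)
My plan is to establish the two containments $J \subseteq \mathfrak{a}$ and $\mathfrak{a} \subseteq J$ by directly unpacking what preimages under Frobenius mean for the individual elements.

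First I would unwind the description of each term in the chain: for each $k \geq 0$,
\begin{equation*}
f^{-k}(J_e) \cap R \;=\; \{\, r \in R \;:\; r^{p^k} \in J_e \,\},
\end{equation*}
so by the $f$-sequence axiom $f^{-1}(J_{n+1}) = J_n$ the term with $e - k \geq 0$ is simply $J_{e-k}$, while for $k > e$ the formula just extends the sequence backward by further preimages. The ascending chain in the statement is therefore indexed by $k$, and by Noetherianity of $R$ it must stabilize; its stable value $J$ coincides with the union $\bigcup_{k \geq 0} f^{-k}(J_e)$.

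Next I would identify this union with $\sqrt{J_e} = \mathfrak{a}$. The inclusion $J \subseteq \mathfrak{a}$ is immediate: any $r$ with $r^{p^k} \in J_e$ lies in $\sqrt{J_e}$. For the reverse, take $r \in \sqrt{J_e}$, so $r^n \in J_e$ for some $n \geq 1$; choose $k$ with $p^k \geq n$ and observe that
\begin{equation*}
r^{p^k} \;=\; r^n \cdot r^{p^k - n} \;\in\; J_e
\end{equation*}
since $J_e$ is an ideal, whence $r \in f^{-k}(J_e) \subseteq J$. Combining the two inclusions gives $J = \mathfrak{a}$.

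The only non-bookkeeping point — and the main (modest) obstacle — is recognizing that the $p$-power exponents are cofinal in all positive exponents, which is exactly what allows Frobenius preimages alone to recover the full radical. Once that observation is in place, the Noetherian hypothesis on $R$ merely guarantees that the ascending chain actually attains this common radical after finitely many steps rather than requiring the full union.
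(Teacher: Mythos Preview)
Your proposal is correct and follows essentially the same argument as the paper: both directions rest on the observation that $p$-power exponents are cofinal among all positive exponents, so Frobenius preimages recover exactly the radical. Your write-up is in fact slightly more explicit than the paper's in identifying the stabilized ideal with the union $\bigcup_{k\ge 0} f^{-k}(J_e)$ before comparing it to $\sqrt{J_e}$.
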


\begin{proof}
To show $J \subseteq \mathfrak{a}$, fix $r \in J$.  Then for all $e$, there exists a $q' \geq q$ such that $r^{q'} \in J_e$, and $r \in \sqrt{J_e} = \mathfrak{a}$. 

\indent Conversely if $r \in \mathfrak{a}$, then for all $e$ there exists an $n$ such that $r^n \in J_e$.  Let $q' > n$, and therefore $r^{q'} \in J_e$, whereby $f^{-e'}(r^{q'}) = r \in J$. 
\end{proof}

\begin{rmk}\label{shift}
Let $R$ be a reduced Noetherian ring of characteristic $p > 0$, and Let $\lbrace J_e \rbrace_{e \in \mathbb{N}}$ be an $f$ sequence in $R$.  Then there exists some $n$ such that $f^{-n}(J_0)$ is the radical ideal for the sequence.  Hence any $f$-sequence can be uniquely extended upward, or shifted to a longer $f$-sequence $\mathfrak{a}_e$ where $\mathfrak{a}_0 = \sqrt{J_e}$ for all $e$, $\mathfrak{a}_e := f^{-n}(J_e)$.
\end{rmk}

\subsubsection{$F$-sequences of modules}\label{sec:F-seq}

\indent \indent For a module $M$ over a Noetherian ring $R$ of characteristic $p>0$, for every $e$ we have a natural map $g_{e} : F^e(M) \rightarrow F^{e+1}(M)$, where $g_{e} := (\psi_{e,e+1} \ \otimes_R \ 1_M)$ as discussed in section~\ref{sec:FF}.  In particular $g_0 : M \rightarrow F(M)$, and in general for $e \leq e'$, we write $g_{e,e'} : F^e(M) \rightarrow F^{e'}(M)$.

\begin{ntn}
For $m_e \in F^e(M)$, we will write $g_{e}(m_{e}) = m_{e+1}$, and for $r \in R$, $g_{e}(r \circ m_{e}) = g_{e}(r^\frac{1}{q} m_{e}) = r^\frac{1}{q} m_{e+1} = r^p \circ m_{e+1}$.
\end{ntn}

\begin{defn} 
An \emph{$F$-sequence of sub-modules} of $F^e(M)$ for some $R$-module $M$ is a sequence of sub-modules $\lbrace N_e \rbrace \subseteq \lbrace F^e(M) \rbrace$ such that $(g_{e})^{-1}(N_{e+1}) = N_e$ for all $e$. 
\end{defn}

\begin{rmk}
Under the standard $R-R$ bi-module construction $R^{f^e}$ as mentioned in remark \ref{rmk:stdconst}, for every $e$, $g_e = f$ is the Frobenius map, and each $N_e$ is a sub-module of $M$.
\end{rmk}
 
\indent $F$-sequences of sub-modules are up to isomorphism a generalization of $f$-sequences of ideals, where $M = R$.  As such, all features from remark \ref{rmk:fseqfacts} generalize as well.  Some proofs and comments justifying these facts follow from similar techniques used by \cite{NoSh.alpha}.  First we state a lemma which we use for fact $3$.

\begin{lemma}\label{lem:fseqann}
Let $R$ be Noetherian commutative ring of characteristic $p>0$, let $M$ be an $R$-module, and let $N_e$ be an $F$-sequence of submodules on $M$, and let $J_e = \ann_R \big( F^e(M)/N_e \big)$ for every $e$.  Then $\lbrace J_e \rbrace$ is an $f$-sequence of ideals in $R$.  
\end{lemma}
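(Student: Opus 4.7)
The plan is to verify $f^{-1}(J_{e+1}) = J_e$ for every $e$; descent of the chain $J_e \supseteq J_{e+1}$ then follows automatically, since $J_{e+1}$ is an ideal closed under $r \mapsto r^p$, giving $J_{e+1} \subseteq f^{-1}(J_{e+1})$. The proof of the identity hinges on the commutation $g_e(r \circ m_e) = r^p \circ g_e(m_e)$ for all $r \in R$ and $m_e \in F^e(M)$, which is exactly the calculation $g_e(r \circ m_e) = r^{1/q} m_{e+1} = r^p \circ m_{e+1}$ already recorded in the notation paragraph preceding the lemma. Using this, if $r^p \in J_{e+1}$ and $m_e \in F^e(M)$, then $g_e(r \circ m_e) = r^p \circ g_e(m_e) \in N_{e+1}$, and the $F$-sequence condition $g_e^{-1}(N_{e+1}) = N_e$ places $r \circ m_e$ in $N_e$, so $r \in J_e$; this yields the containment $f^{-1}(J_{e+1}) \subseteq J_e$.

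The reverse containment $J_e \subseteq f^{-1}(J_{e+1})$ is the subtle step, because the image $g_e(F^e(M))$ is usually a proper subset of $F^{e+1}(M)$. Given $r \in J_e$ and an arbitrary $y = \sum_i (b_i^{1/(pq)} \otimes_R m_i)_{e+1} \in F^{e+1}(M)$, I would rewrite each simple tensor using the right $R^{1/(pq)}$-action as $(b_i^{1/(pq)} \otimes_R m_i)_{e+1} = g_e((1 \otimes_R m_i)_e) \cdot b_i^{1/(pq)}$, thereby exhibiting $F^{e+1}(M)$ as generated over the right $R^{1/(pq)}$-action by $g_e(F^e(M))$. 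Combining with the commutation relation and with the commutativity of left and right $R^{1/(pq)}$-multiplication in the first tensor coordinate gives
\begin{equation*}
r^p \circ y = \sum_i \bigl(r^p \circ g_e((1 \otimes_R m_i)_e)\bigr) \cdot b_i^{1/(pq)} = \sum_i g_e\bigl(r \circ (1 \otimes_R m_i)_e\bigr) \cdot b_i^{1/(pq)}.
\end{equation*}
Each $r \circ (1 \otimes_R m_i)_e$ lies in $N_e$ by hypothesis, $g_e$ carries it into $N_{e+1}$, and right multiplication by $b_i^{1/(pq)}$ keeps the result inside $N_{e+1}$, so $r^p \circ y \in N_{e+1}$ and hence $r^p \in J_{e+1}$.

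The principal obstacle is precisely this last manoeuvre: propagating annihilation on the image of $g_e$ to annihilation on all of $F^{e+1}(M)$. This requires recognizing $N_{e+1}$ as closed under the right $R^{1/(pq)}$-structure on $F^{e+1}(M)$ and not merely under the twisted left $R$-action, which is a mild but essential strengthening of what one might read off the definition. Once that closure is in place, the identity $f^{-1}(J_{e+1}) = J_e$ falls out of the bookkeeping above and completes the verification that $\{J_e\}$ is an $f$-sequence of ideals in $R$.
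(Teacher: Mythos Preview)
Your proof is correct and follows essentially the same approach as the paper's: both directions rest on the commutation $g_e(r \circ m_e) = r^p \circ g_e(m_e)$ together with the $F$-sequence condition $g_e^{-1}(N_{e+1}) = N_e$, and the inclusion $J_e \subseteq f^{-1}(J_{e+1})$ is reduced to checking the action of $r^p$ on generators $(1\otimes m)_{e+1}$. Your forward direction is actually a bit cleaner than the paper's, which makes an unnecessary forward reference to $N_e = (N_e)^F$; and the closure issue you flag---that $N_{e+1}$ be stable under right $R^{1/(pq)}$-multiplication---is exactly the paper's implicit assumption that $N_{e+1}$ is a left $R$-submodule, since on $F^{e+1}(M)$ left action by $r\in R$ coincides with right multiplication by $r^{1/(pq)}$.
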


\begin{proof}
For every $e$, we want to show $f^{-1}(J_{e+1}) = J_e$.  Fix $r \in f^{-1}(J_{e+1})$, then $r^p \in J_{e+1}$.  Now fix $m_e \in F^e(M) \setminus N_e$.  By remark~\ref{rmk:Fseqfacts}, number $2$ below,  $N_e = N_e^F$ and hence $g_{e}(m_e) = m_{e+1} \notin N_{e+1}$.  Since $r^p \in J^{e+1}$, we have $r^p \circ m_{e+1} = r^\frac{p}{qp}m_{e+1} = r^\frac{1}{q}m_{e+1} \in N_{e+1}$.  But $r \circ m_e = r^\frac{1}{q}m_e$, and thus $r  \circ  m_e \in g_{e}^{-1}(r^\frac{1}{q}m_{e+1}) \subseteq N_e$.  Thus $r \in J_e = \ann_R \big( F^e(M)/N_e \big)$.  

\indent Conversely, first note that $F^e(M)$ is generated by elements of the form $(1 \otimes_R m)_e$ for some $m \in M$, and their images under $g_{e}$ generate $F^{e+1}(M)$.  

\indent Now fix $r \in J_e$, and we want to show $r^p \in J_{e+1}$.  Let $(1 \otimes_R m)_{e+1}$ be a generator of $F^{e+1}(M)$, and consider $r^p \circ (1 \otimes_R m)_{e+1} = (r^{\frac{1}{q}} \otimes_R m)_{e+1}$.  Now, any $(r^{\frac{1}{q}} \otimes_R m)_{e} \in g_e^{-1}\big( (r^{\frac{1}{q}} \otimes_R m)_{e+1} \big)$ will be in $N_e$ since it is equal to $r \circ (1 \otimes_R m)_e$.  Therefore, $r^p \circ (1 \otimes_R m)_{e+1} \in N_{e+1}$, and since this was an arbitrary generator, we have $r^p \circ F^{e+1}(M) \subseteq N_{e+1}$, and $r^p \in J_{e+1}$.
\end{proof}

\begin{rmk}\label{rmk:Fseqfacts}
Let $R$ be a Noetherian ring of characteristic $p > 0$, and let $\lbrace N_e \rbrace$ be an $f$-sequence in $F^e(M)$.
\begin{enumerate}
\item For all $e$, $\Ass_R \big( F^e(M)/N_e \big) \subseteq $ $\Ass_R \big( F^{e+1}(M)/N_{e+1} \big)$.

\begin{proof}
\indent Let $P \in \Ass_R \big( F^e(M)/N_e \big)$ with $P = (N_e :_R m_{e})$, where $g_{e}(m_{e}) = m_{e+1}  \notin N_{e+1}$ by definition of an $F$-sequence.  For $r \in P$, $g_{e}(r \circ m_{e}) = g_{e}(r^\frac{1}{q} m_{e}) = r^\frac{1}{q} m_{e+1} = r^\frac{pq}{q} \circ m_{e+1} = r^p \circ m_{e+1} \in N_{e+1}$.  Hence $P^{[p]} \subseteq (N_{e+1} :_R m_{e+1})$. 

\indent On the other hand, if $s \in (N_{e+1} :_R m_{e+1})$, then certainly $s^p \in (N_{e+1} :_R m_{e+1})$.  We then have $g_{e}^{-1}(s^p \ \circ \ m_{e+1}) = g_{e}^{-1}(s^\frac{p}{qp} m_{e+1}) = g_{e}^{-1}(s^\frac{1}{q} m_{e+1}) = s^\frac{1}{q} m_{e} = s \circ m_{e} \in N_e$, again by definition of $F$-sequences, and $s \in P$.

\indent Finally, since $P^{[p]} \subseteq (N_{e+1} :_R m_{e+1}) \subseteq P$, and $P = \sqrt{P^{[p]}}$, $P$ is minimal over $P^{[p]}$, and hence $P$ is minimal over the annihilator of an element of $F^{e+1}(M)/N_{e+1}$, and since $R$ is Noetherian (see sec.~\ref{sec:GenAss}), $P \in \wAss_R \big( F^{e+1}(M)/  N_{e+1} \big) = \Ass_R \big( F^{e+1}(M)/N_{e+1} \big)$.
\end{proof}

\item  For every $e$, $N_e$ is Frobenius closed.

\begin{proof}
\indent Let $N_e$ be a term in an $F$-sequence, and fix $n_{e} \in N_e^F$.  Then $g_{e,e'}(n_e) = n_{e'} \in N^{[q']} \subseteq N_{e'}$ for $e' \gg e$.  But then $n_e \in g_{e,e'}^{-1}(n_{e'})$, whereby $n_e \in g_{e,e'}^{-1}(N_{e'}) = N_e$.
\end{proof}

\item $\sqrt{\ann_R \big( F^e(M)/N_e \big)} = \sqrt{\ann_R \big( F^{e'}(M)/N_{e'} \big)}$ for all $e, e'$.

\begin{proof}
By lemma~\ref{lem:fseqann}, $\lbrace J_e \rbrace = \lbrace \ann_R \big( F^e(M)/N_e \big) \rbrace$ is an $f$-sequence.  Then this statement is true by remark~\ref{rmk:fseqfacts}, fact $3$.
\end{proof}

\item  There exists a bijective correspondence between $F$-sequences of $F^e(M)$ and submodules of $R^\infty \otimes_R M$.  This fact is true because any sub-module of $N \subseteq R^\infty \otimes_R M$ is by definition the direct limit of an $F$-sequence of modules.

\indent For an submodule $N \subseteq R^\infty \otimes_R M$, this correspondence is given explicitly by $\Gamma \ : \ N \mapsto \lbrace N_e \rbrace$, where for every $e \in \mathbb{N}$, $N_e := \lbrace n_{e} \in F^e(M) \ | \ n_{\infty} \in N \rbrace$. 
\end{enumerate} 
\end{rmk}

\begin{example}\label{example:Fseqs} \hspace{1 cm}
\begin{enumerate}
\item Recall the $f$-sequence of ideals $\lbrace J_e \rbrace = \lbrace (I^{[q]})^F \rbrace$ for some ideal $I$.  Here $N_e = (R^{\frac{1}{f^e}} \otimes_R I)^F \cong (IR^{\frac{1}{p^e}})^F$, where this last expression is isomorphic to $(I^{[q]})^F \subseteq R$.  Notice that $N = \underrightarrow{\lim} \ N_e \cong IR^\infty$, which as we stated above was the ideal in $R^\infty$ corresponding to this $f$-sequence of ideals.
\item For $R =  k[x,y]$, recall the $f$-sequence of ideals $\lbrace J_e \rbrace = \lbrace (x, y^q) \rbrace$.  Here $N_e \cong (x^{\frac{1}{q}},y)R^{\frac{1}{q}}$, which is isomorphic to $(x,y^q) \subseteq R$.  Notice that $N = \underrightarrow{\lim} \ N_e \cong (x, x^{\frac{1}{p}}, x^{\frac{1}{p^2}}, \ldots, y) R^\infty$, which again was the ideal in $R^\infty$ corresponding to this $f$-sequence of ideals. 
\item Letting $R$ and $M$ be as above, then $\lbrace 0^F_{F^e(M)} \rbrace$ is an $F$-sequence of sub-modules of $F^e(M)$.  This fact is easy to see because:
\begin{align*}
m_e \in 0^F_{F^e(M)} &\Leftrightarrow g_{e,e'}(m_e) = 0 \in F^{e'}(M) \ \text{for} \ e' \gg e \\
&\Leftrightarrow g_{e}(m_{e}) \in 0^F_{F^{e+1}(M)}
\end{align*}
As stated above in remark~\ref{rmk:Zero}, $N = 0 = \underrightarrow{\lim} \ 0^F_{F^e(M)}$.
\end{enumerate}   
\end{example}

\subsection{Generalized associated prime ideals}\label{sec:GenAss}

\indent \indent Associated prime ideals of a module become more subtle over a non-Noetherian ring.  If $S$ is a ring, we state four subsets of $\Spec(S)$ which are in general distinct. 

\begin{defn}
Let $S$ be any commutative ring with identity (not necessarily of prime characteristic).  Let $M$ be an $S$-module.  Let $P \in \Spec(S)$ be a prime ideal.

\begin{enumerate}
\item $P \in \Ass_S(M)$ is an \emph{associated prime ideal} of $M$, if $P$ is the annihilator in $S$ of some non-zero element $m \in M$. In such a case we write $P = \text{ann}_S(m)$.  That is, $rm = 0$ for all $r \in P$, and if $r'm = 0$ then $r' \in P$.
\item $P \in  \wAss_{S} (M)$ is a \emph{weakly associated prime}, or weak Bourbaki prime of $M$ if it is minimal over some $\text{ann}_{S} (m)$ for some $m \in M$.  That is, $\text{ann}_{S} (m) \subseteq P$, and if there exists a prime ideal $Q \subset S$ such that $\text{ann}_{S} (m) \subseteq Q \subseteq P$, then $Q = P$.
\item $P \in  \sK_{S} (M)$ is a \emph{strong Krull prime} of $M$ if for any finitely generated sub-ideal $I \subseteq P$ we have $ I \subseteq \text{ann}_{S} (m) \subseteq P$ for some $m \in M$.
\item $P \in  \K_{S} (M)$ is a \emph{Krull prime} of $M$ if for any element $ r \in P$ we have $ r \in \text{ann}_{S} (m) \subseteq P$ for some $m \in M$.
\end{enumerate}
\end{defn}

\indent For any ring $S$ and $S$-module $M$, we always have the containments $\Ass_S(M) \subseteq \wAss_{S} (M) \subseteq \sK_{S} (M) \subseteq \K_{S} (M)$, with none of the containments reversible in general.  For an overview see \cite{IrRu-ass}.  It is possible that $\Ass_S(M)$ is empty, and we provide an example.

\begin{example}
Let $R = \mathbb{F}_p [x]$, and hence $R^\infty  = \mathbb{F}_p [x, x^{\frac{1}{p}}, x^{\frac{1}{p^2}}, \ldots]$.  We claim $\Ass_{R^\infty}(R^\infty / (x) R^\infty) = \emptyset$.  

\indent To show this claim, first note that $(x)^\infty = \sqrt{(x) R^\infty}$.  Since $(x)^\infty$ is maximal it is therefore the only prime ideal which contains $(x)R^\infty$, and hence is the only possible element of $\Ass_{R^\infty}(R^\infty / (x) R^\infty)$.

\indent Suppose $(x)^\infty = ((x) R^\infty :_{R^\infty} r)$ for some $r \in R^\infty \setminus (x)R^\infty$,  where $r = \Sigma_{i = 0}^{n} a_i x^{\frac{i}{q}}$ for some $q$, and $a_i \in \mathbb{F}_p$ for all $i$.  Then $x^{\frac{1}{pq}}r \notin (x) R^\infty$.  We can see this fact by multiplying: 

\begin{align*}
x^{\frac{1}{pq}} r & = x^{\frac{1}{pq}} \Sigma_{i = 0}^{n} a_i x^{\frac{i}{q}} \\
& = \Sigma_{i = 0}^{n} a_i x^{\frac{1}{pq}} x^{\frac{i}{q}} \\
& = \Sigma_{i = 0}^{n} a_i x^{\frac{ip + 1}{qp}}
\end{align*}

\noindent But since $r \notin (x)R^\infty$, for at least one $i$, the corresponding monomial is non-zero, and $i < q$.  But then $\frac{ip + 1}{qp} < 1$, and we see that $x^{\frac{1}{pq}} r \notin (x)R^\infty$.  

\indent Hence $(x)^\infty$ cannot be an associated prime of $R^\infty / (x) R^\infty$, and thus \\ $\Ass_{R^\infty}(R^\infty / (x) R^\infty) = \emptyset$.
\end{example} 
 
 \indent While $\Ass_S(M)$ can be an empty set, if $S$ is a Noetherian ring and $M \neq 0$, there always exists at least one associated prime of $M$.  In this case the first three sets are equal, i.e. $\Ass_S(M) = \wAss_{S} (M) = \sK_{S} (M)$.  For an example of why $\K_{S} (M)$ is omitted from this equality, see \cite[Remark $2.2$]{nmeSh-sKflat}.  However, if $S$ is Noetherian and $M$ is finitely generated, then all four sets are equal, and $\Ass_S(M)$ is a finite set.  
 
\subsection{Generalized depth}\label{sec:GenDepth}

\indent \indent Let $S$ be any ring, again not necessarily Noetherian, let $I \subset S$ be an ideal, and let $M$ be an $S$-module.  The \emph{grade} of $I$ on $M$ is another fundamential notion which generalizes into multiple concepts in the non-Noetherian context.  Discussions of generalized grades can be found in \cite{Ho-grade} and \cite{Fb-ThyGr}, and we include their definitions.

\begin{defn}
Let $S$ be any commutative ring with identity, let $M$ be an $S$-module, and let $I\subset S$ be an ideal such that $M \neq IM$.
\begin{enumerate}
\item $\cgr_S (I,M) := \text{sup} \lbrace | \mbf{x} | \rbrace$ where $\mbf{x} \subset I$ is a finite $M$-regular sequence contained in $I$.
\item $\kgr_S (I,M) := \text{sup} \lbrace n - h \rbrace$ where $\mbf{x} = x_1, \ldots, x_n \subset I$ is a finite set, and $h$ is index of the highest non-zero homology group of the Koszul complex $K_\bullet (x_1, \ldots, x_n ; M)$.  If $\mbf{x}$ is a finite sequence, we can discuss the koszul grade on $\mbf{x}$, $\kgr_S (\mbf{x},M) := n - h$ with $n$ and $h$ as above.
\item $\rgr_S (I,M) := \text{inf} \lbrace i \ | \ \Ext^i_S (S/I, M) \neq 0 \rbrace$.
\end{enumerate} 
\end{defn}
 
\indent For arbitrary $S, I, \text{and} \ M$, $\cgr_S (I,M) \leq \kgr_S (I,M) \leq \rgr_S (I,M)$ is always true, while each inequality can be strict.  See \cite[Remark $2.2$]{nmeSh-sKflat} for an example where $\text{c} \ \gr_S (I,M) = 0 < \kgr_S (I,M)$, in which the inequality holds by Remark~\ref{rmk:AssDepth} below.  See \cite[$2$]{Fb-ThyGr} for an example where $\kgr_S (I,M) < \rgr_S (I,M)$.  

\indent If $I \subseteq J$ are ideals in $S$, then for each notion \underline{\hspace{.3 cm}} grade, we have \underline{\hspace{.3 cm}} $\gr_S (I, M) \leq$ \underline{\hspace{.3 cm}} $\gr_S (J, M)$, \cite{Fb-ThyGr}.  For example, $\cgr_S (I,M) \leq \cgr_S (J,M)$.

\begin{example} 
These measures could be infinite.  Let $S = k[x_1, x_2, \ldots ]$ be the non-Noetherian polynomial ring over some field $k$ with infinitely many variables, let $I = (x_1, x_2, \ldots )$, and $M = S$.  Then $x_1, x_2, \ldots$ is an infinite regular sequence over $M$.  Hence $\cgr_S (I,M)$ is infinite, as are the other two measures since $\cgr_S (I,M) \leq \kgr_S (I,M) \leq \rgr_S (I,M)$.
\end{example}

\indent If $S$ is a Noetherian ring and $M$ is a finitely generated $S$-module, then all three concepts \underline{\hspace{.3 cm}} $\gr_S (I, M)$ coincide, and are indeed finite.  In such a case, all maximal $M$-sequences in $I$ have the same length. 

\indent If $(S, \mathfrak{n})$ is local, the \underline{\hspace{.3 cm}} \emph{depth} is, $\underline{\hspace{.3 cm}} \ \depth_S (M) := \underline{\hspace{.3 cm}} \ \gr_S (\mathfrak{n},M)$.  The relationships between generalized associated prime ideals and generalized depth are an exercise following from their definitions.

\begin{rmk}\label{rmk:AssDepth}
Let $(S, \mathfrak{n}, l)$ be a local ring with quotient field $l = S/\mathfrak{n}$, and let $M$ be an $S$-module such that $M \neq \mathfrak{n} M$. Then,
\begin{enumerate}
\item $\mathfrak{n} \in \Ass_S (M) \ \text{if and only if} \ \rdepth_S (M) = 0$
\item $\mathfrak{n} \in \sK_S (M) \ \text{if and only if} \ \kdepth_S (M) = 0$
\item $\mathfrak{n} \in \K_S (M) \ \text{if and only if} \ \cdepth_S (M) = 0$
\end{enumerate}
\end{rmk}

\begin{proof} 
\indent \underline{1):} 
$$\begin{array}{ll}
\mathfrak{n} \in \Ass_S (M) & \Leftrightarrow 0 \neq \Hom_S(l,M) \cong \Ext_S^0(l,M) \\
& \Leftrightarrow \rdepth_S (M) = 0
\end{array}$$
\indent \underline{2):} 
$$\begin{array}{ll}
\mathfrak{n} \in \sK_S (M) & \Leftrightarrow \text{for all finite} \ \mbf{x} = x_1, \ldots, x_n \subset \mathfrak{n}, (\mbf{x}) \subseteq \text{ann}_S(m) \subseteq \mathfrak{n} \\
& \ \ \ \ \text{for some non-zero} \ m \in M \\
& \Leftrightarrow \ \text{for all finite} \ \mbf{x} = x_1, \ldots, x_n \subset \mathfrak{n}, H_n(\mbf{x};M) \neq 0 \\
& \Leftrightarrow \text{sup} \lbrace \kgr_S(\mbf{x};M) \ | \ \mbf{x} \subset \mathfrak{n} \ \text{is a finite set} \rbrace = 0 \\
& \Leftrightarrow \kdepth_S(M) = 0
\end{array}$$
\indent \underline{3):}
$$\begin{array}{ll}
\mathfrak{n} \in \K_S (M) & \Leftrightarrow \text{for all} \ x \in \mathfrak{n}, \ x \in \text{ann}_S(m) \subseteq \mathfrak{n} \ \text{for some} \ m \in M, \ \text{i.e.,} \\
& \ \ \ \ \text{there exist no} \ M$-$\text{regular elements in} \ \mathfrak{n} \\
& \Leftrightarrow \text{sup} \lbrace |\mbf{x}| \ | \ \mbf{x} \subset \mathfrak{n} \ \text{is a regular sequence} \rbrace = 0 \\
& \Leftrightarrow \cdepth_S(M) = 0 \\
\end{array}$$
\end{proof}

\indent Note that we have no equivalence for $\wAss_S (M)$.  These generalized depths do not relate to weakly associated primes in an obvious way. 

\indent Also note that if $S$ is Noetherian and $M$ is finitely generated, then all concepts coincide, and simply $\mathfrak{n} \in \Ass_S (M) \ \text{if and only if} \ \depth_S (M) = 0$.

\subsection{F-pure rings}\label{sec:F-pure}

\indent \indent Let $S$ be any ring, and let $A$ and $B$ be $S$-modules.  An $S$-module map $\psi: A \rightarrow B$ is \emph{pure} if the map $\psi \otimes_S 1_M: A \otimes_S M \rightarrow B \otimes_S M$ is injective for all $S$-modules $M$.  Alternately we say that $A$ is a pure submodule of $B$.  

\indent We say that a Noetherian ring $R$ of characteristic $p > 0$ is \emph{$F$-pure} if the embedding $\psi: R \hookrightarrow R^{\frac{1}{p}}$ is a pure map.  Equivalently, for any $R$-module $M$ the induced map $M \rightarrow F(M) := R \otimes_R M \rightarrow R^{\frac{1}{p}} \otimes_R M$ is an injective map (all modules act flat).   Note that if $R$ is $F$-pure, then for any $q \leq q'$ the embeddings $\psi_{e,e'} : R^{\frac{1}{q}} \hookrightarrow R^{\frac{1}{q'}}$, as well as the embeddings $\psi_{e,e^\infty} : R^\frac{1}{q} \hookrightarrow R^{\infty}$ are pure maps as well.

\indent If a ring $R$ is $F$-pure, then for any $R$-module $M$ we have that $M \subseteq F(M)$ is a pure submodule as well.  This statement holds since given $M \hookrightarrow F(M) := (R \otimes_R M) \hookrightarrow (R^{\frac{1}{f}} \otimes_R M)$, and given any $R$-module $N$, we have $\big( M \otimes_R N \big)  \rightarrow \big( F(M) \otimes_R N \big)$:

$$\begin{array}{cl}
&= \big( (R \otimes_R M) \otimes_R N \big)  \rightarrow \big( (R^{\frac{1}{f}} \otimes_R M) 
\otimes_R N \big) \\
&= \big( R \otimes_R (M \otimes_R N) \big)  \rightarrow \big( R^{\frac{1}{f}} \otimes_R (M \otimes_R N) \big) \\
&= \big( M \otimes_R N \big) \rightarrow \big( F(M \otimes_R N) \big)
\end{array}$$

\noindent which is an injection by $F$-purity of $R$. 

\begin{rmk}\label{rmk:Fp-Fcl}
If $R$ is an $F$-pure ring, then every ideal $I \subset R$ is Frobenius closed, since if $r^q \in I^{[q]}$, then $r \in IR^{\frac{1}{q}}$, and hence $\overline{r} = \overline{0}$ in $R^{\frac{1}{q}} / IR^{\frac{1}{q}} \cong (R^{\frac{1}{q}} \otimes_R R / I)$.  But since $R$ is $F$-pure, $R/I \cong (R \otimes_R R / I) \rightarrow (R^{\frac{1}{q}} \otimes_R R / I)$ is injective, $\overline{r} = \overline{0}$ in $R/I$, and $r \in I$. 

\indent In particular all $F$-pure rings are reduced, since the $0$ ideal is Frobenius closed.  Hence we need not mention the reduced assumption in section~\ref{sec:F-pureResults}.
\end{rmk}

\begin{rmk}
A particular case of characteristic $p>0$ rings is \emph{$F$-finite} rings $R$, for which $R^{\frac{1}{p}}$ is finitely generated as a right $R$-module.  In such a case $F$-purity is equivalent to \emph{$F$-splitness} \cite[Corollary 5.3]{HoRo-purity}, meaning rings such that the embedding $R \hookrightarrow R^{\frac{1}{p}}$ splits.  We will not discuss this characterization.  However the reader will be advised that under the assumption of $F$-finiteness, $F$-splitness is a sufficient condition in order for all results in section~\ref{sec:F-pureResults} to hold.
\end{rmk}

\begin{example}
\indent Some examples of $F$-pure rings are 

\begin{enumerate}
\item Regular rings such as the polynomial ring $S = k[x_1, \ldots, x_n]$, where $k$ is a field of characteristic $p > 0$. 
\item If $(S, \mathfrak{n})$ is any $F$-finite regular local ring and $R = S / I$ is any quotient, $R$ is $F$-pure if and only if $(I^{[q]} :_R I) \nsubseteq \mathfrak{n}^{[q]}$ for all $q$.  This result is known as \emph{Fedder's criterion}~\cite{Fed83}. 
\item A non-example of an $F$-pure ring is $R := [x,y,z] / (x^p - yz^p)$.  We can see that the ideal $(z)$ is not Frobenius closed.  Certainly $x \notin (z)$, but $x^p = yz^p \in (z)^{[p]}$, and hence $x \in (z)^F \setminus (z)$.  

Alternatively, we can see $x = y^{\frac{1}{p}}z \in (z)R^\infty \cap R = (z)^F$.
\end{enumerate} 
\end{example}

\section{Results for generalized associated prime ideals}\label{sec:GenAssResults}

\indent \indent Given a module $M$ over a regular local ring $R$, Epstein and Shapiro showed a characterization of the strong Krull primes of $R^\infty \otimes_R M$. 

\begin{theorem}\label{thm:Neil}\cite[Corollary 4.9]{nmeSh-sKflat}
Let $R$ be a regular Noetherian ring of characteristic $p>0$.  Let $M$ be any $R$-module.  Then \[
\sK_{R^\infty} (R^\infty \otimes_R M) = \bigcup_{\q \in \Ass_RM} \sK_{R^\infty} (R^\infty / \q R^\infty).
\]
\end{theorem}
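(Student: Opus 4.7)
The plan is to reduce Theorem~\ref{thm:Neil} to the main $F$-sequence correspondence stated in the introduction, applied in two stages: first to the module $M$, and then separately to each residue module $R/\mathfrak{q}$ for $\mathfrak{q} \in \Ass_R M$.

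First I would check that every regular Noetherian ring of characteristic $p > 0$ is $F$-pure: by Kunz's theorem the inclusion $R \hookrightarrow R^{1/p}$ is faithfully flat, hence pure. Consequently each embedding $R^{1/q} \hookrightarrow R^{1/q'}$ is pure, so the transition map $g_e : F^e(M) \to F^{e+1}(M)$ is injective for every $e$, $0^F_{F^e(M)} = 0$, and $\{N_e\}_{e \in \mathbb{N}}$ with $N_e = 0$ is a legitimate $F$-sequence whose direct limit is $0 \in R^\infty \otimes_R M$. Applying the main correspondence to this trivial $F$-sequence gives
\[
\sK_{R^\infty}(R^\infty \otimes_R M) \;=\; \phi^{-1}\!\left(\bigcup_{e \in \mathbb{N}} \Ass_R\bigl(F^e(M)\bigr)\right).
\]

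Second, I would invoke Kunz once more (so that $f^e : R \to R$ is flat for every $e$) and combine it with the standard flat base change formula for associated primes to obtain
\[
\Ass_R\bigl(F^e(M)\bigr) \;=\; \bigcup_{\mathfrak{q} \in \Ass_R M} \Ass_R\bigl(F^e(R/\mathfrak{q})\bigr).
\]
Swapping the unions over $e$ and $\mathfrak{q}$, and re-applying the main correspondence to each $R/\mathfrak{q}$ (whose Frobenius colimit is $R^\infty \otimes_R R/\mathfrak{q} \cong R^\infty/\mathfrak{q} R^\infty$), I obtain
\[
\sK_{R^\infty}(R^\infty \otimes_R M) \;=\; \bigcup_{\mathfrak{q} \in \Ass_R M} \sK_{R^\infty}\bigl(R^\infty/\mathfrak{q} R^\infty\bigr),
\]
as desired.

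The main obstacle will be the translation between the bimodule framework of Section~\ref{sec:FF}—where the left $R$-action on $F^e(M)$ factors through the $q$-th root map on $R^{1/f^e}$—and the usual Frobenius pullback $R \otimes_{R, f^e} M$ for which the classical associated-primes formula is stated. Once these two descriptions of $F^e(M)$ are identified as $R$-modules so that Kunz's flatness and flat base change apply cleanly, the remainder is a formal union-swap combined with two applications of the main correspondence.
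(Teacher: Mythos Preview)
The paper does not prove Theorem~\ref{thm:Neil} at all: it is quoted verbatim from \cite[Corollary~4.9]{nmeSh-sKflat} as motivation, and the paper then proves its own Theorem~\ref{thm:Fseqweakass} as a generalisation that drops the regularity hypothesis. So there is no ``paper's own proof'' to compare against.

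What you have written is therefore something different in kind: a derivation of the cited result \emph{from} the paper's new machinery. That derivation is essentially sound. Kunz gives $F$-purity of $R$, so $0^F_{F^e(M)}=0$ and Corollary~\ref{cor:WeakAss} yields $\sK_{R^\infty}(R^\infty\otimes_R M)=\phi^{-1}\bigl(\bigcup_e \Ass_R F^e(M)\bigr)$; Kunz again makes $f^e$ flat, so the associated-primes base-change formula gives $\Ass_R F^e(M)=\bigcup_{\mathfrak q\in\Ass_R M}\Ass_R F^e(R/\mathfrak q)$; swapping the unions and reapplying Corollary~\ref{cor:WeakAss} to each $R/\mathfrak q$ finishes. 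This is a genuinely different route from the original proof in \cite{nmeSh-sKflat}, which works directly with strong Krull primes and flatness of $R\hookrightarrow R^\infty$ rather than passing through the $F$-sequence correspondence. Your route has the virtue of exhibiting Theorem~\ref{thm:Neil} as a formal consequence of the paper's main theorem, confirming the claim that Theorem~\ref{thm:Fseqweakass} is indeed a generalisation.

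The one point that needs care, as you note, is the bookkeeping between the left $R$-action on $R^{1/f^e}\otimes_R M$ used here and the standard Frobenius pullback $R\otimes_{R,f^e}M$ for which the base-change formula is usually stated; the remark following Theorem~\ref{thm:Fseqweakass} (that $\Ass_R(F^e(M)/N_e)$ is two-sided) is exactly what lets you pass between them.
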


\indent Omitting the hypothesis of the regularity of $R$, and for an $F$-sequence $N_e \subseteq F^e(M)$, we find a more general result.  

\begin{theorem}\label{thm:Fseqweakass}
Let $R$ be a reduced Noetherian ring of characteristic $p > 0$, and let $R^\infty$ be its perfect closure.  Let $P$ and $P^\infty$ be corresponding prime ideals in $\Spec(R)$ and $\Spec(R^\infty)$ respectively.  Let $\lbrace N_e \rbrace \subseteq \lbrace F^e(M) \rbrace$ be an $F$-sequence of sub-modules of $F^e(M)$ for an $R$-module $M$.  Let $N = \underrightarrow{\lim} \ N_e$ be the corresponding sub-module of $(R^\infty \otimes_R M)$.  Then the following are all equivalent:  
\begin{enumerate}
\item $P \in \bigcup \Ass_R \big( F^e(M) / N_e \big)$ 
\item $P^\infty \in \wAss_{R^\infty} \big( (R^\infty \otimes_R M)/N \big)$
\item $P^\infty \in \sK_{R^\infty} \big( (R^\infty \otimes_R M)/N \big)$
\end{enumerate}
\end{theorem}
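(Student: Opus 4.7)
The plan is to prove the cycle $(1) \Rightarrow (2) \Rightarrow (3) \Rightarrow (1)$; the middle implication $(2) \Rightarrow (3)$ is immediate from the containment $\wAss_{R^\infty} \subseteq \sK_{R^\infty}$ recorded in Section~\ref{sec:GenAss}. The real content sits in the two outer implications, and both rest on the following structural observation I would establish first: if $\bar m_\infty \in (R^\infty \otimes_R M)/N$ arises as the image of some $\bar m_{e'} \in F^{e'}(M)/N_{e'}$, and $\bar m_e := g_{e', e}(\bar m_{e'})$ for $e \geq e'$ denotes its iterated images, then the $R^{1/p^e}$-annihilators $Q_e := \ann_{R^{1/p^e}}(\bar m_e)$ satisfy $Q_{e+1} \cap R^{1/p^e} = Q_e$, a direct consequence of the $F$-sequence condition $g_e^{-1}(N_{e+1}) = N_e$. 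In particular the contractions $Q_e \cap R$ are constant in $e$ and equal $\ann_{R^\infty}(\bar m_\infty) \cap R$. Moreover, writing $P^{1/p^e}$ for the unique prime of $R^{1/p^e}$ contracting to a prime $P$ of $R$, whenever this common contraction equals $P$, the homeomorphism $\Spec(R^{1/p^e}) \cong \Spec(R)$ forces $\sqrt{Q_e} = P^{1/p^e}$, so $Q_e \subseteq P^{1/p^e} \subseteq P^\infty$.

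For $(1) \Rightarrow (2)$, I would start from $P = \ann_R(\bar m_e)$ and let $\bar m_\infty$ be the image of $\bar m_e$. Primality of $P$ together with the inverse Frobenius isomorphism $R \cong R^{1/p^e}$ gives $Q_e \cap R = P$, which by the constancy above propagates to $Q_{e''} \cap R = P$ for every $e'' \geq e$. The two-sided squeeze $PR^\infty \subseteq \ann_{R^\infty}(\bar m_\infty) \subseteq P^\infty$ then follows: the left containment comes from $P$ killing $\bar m_e$ and hence $\bar m_\infty$; the right containment holds because any $s \in \ann_{R^\infty}(\bar m_\infty)$ must lie in some $Q_{e''} \subseteq P^{1/p^{e''}} \subseteq P^\infty$. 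Taking radicals and invoking $\sqrt{PR^\infty} = P^\infty$ forces $\sqrt{\ann_{R^\infty}(\bar m_\infty)} = P^\infty$, exhibiting $P^\infty$ as the unique minimal prime over $\ann_{R^\infty}(\bar m_\infty)$, so $P^\infty \in \wAss_{R^\infty}((R^\infty \otimes_R M)/N)$.

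For $(3) \Rightarrow (1)$, Noetherianity of $R$ makes $PR^\infty$ a finitely generated sub-ideal of $P^\infty$, so the strong Krull hypothesis produces a nonzero $\bar m_\infty$ satisfying $PR^\infty \subseteq \ann_{R^\infty}(\bar m_\infty) \subseteq P^\infty$; contracting and using $P^\infty \cap R = P$ gives $\ann_{R^\infty}(\bar m_\infty) \cap R = P$. Lifting $\bar m_\infty$ to any $\bar m_{e'} \in F^{e'}(M)/N_{e'}$ delivers $Q_{e'} \cap R = P$ by constancy, whence $\sqrt{Q_{e'}} = P^{1/p^{e'}}$; contracting back to $R$ realizes $P$ as the radical, and hence the unique minimal prime, of $\ann_R(\bar m_{e'})$. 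This places $P \in \wAss_R(F^{e'}(M)/N_{e'}) = \Ass_R(F^{e'}(M)/N_{e'})$ by the Noetherian identity $\wAss_R = \Ass_R$, closing the cycle.

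The main obstacle I anticipate is disentangling the two distinct natural $R$-actions on each $F^e(M)$ --- the twisted left action via $r \mapsto r^{1/p^e}$ and the right action inherited from $R \subseteq R^{1/p^e}$ --- which agree on prime annihilators but can differ in general. Bypassing this ambiguity by working throughout with the unambiguous $R^{1/p^e}$-annihilator $Q_e$, and translating back to $R$ via contraction, should make the argument go through cleanly; once this is in hand, the proof becomes a clean combination of the homeomorphism $\phi$, the $F$-sequence defining relation, and the Noetherian identity $\wAss_R = \Ass_R$.
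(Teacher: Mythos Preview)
Your proposal is correct and follows the same cycle $(1)\Rightarrow(2)\Rightarrow(3)\Rightarrow(1)$ as the paper, with the same two substantive steps: sandwiching $\ann_{R^\infty}(\bar m_\infty)$ between $PR^\infty$ and $P^\infty$ for $(1)\Rightarrow(2)$, and exploiting finite generation of $PR^\infty$ for $(3)\Rightarrow(1)$. The difference is organizational. The paper argues each implication by direct computation---in $(1)\Rightarrow(2)$ it splits into the cases $q'\le q$ and $q'>q$ and invokes Frobenius closedness of $N_e$ (Remark~\ref{rmk:Fseqfacts}(2)) for the latter; in $(3)\Rightarrow(1)$ it shows $P$ is literally the left colon $(N_e:_R m_e)$ and lands directly in $\Ass_R$. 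You instead isolate the compatibility $Q_{e+1}\cap R^{1/p^e}=Q_e$ of the $R^{1/p^e}$-annihilators as a reusable lemma, which absorbs the Frobenius-closedness step and lets both implications run through the single fact $\sqrt{Q_e}=P^{1/p^e}$; your $(3)\Rightarrow(1)$ then concludes via $\wAss_R=\Ass_R$ rather than exhibiting $P$ as an exact annihilator. Your framing also handles the left/right $R$-action ambiguity up front by working with the unambiguous $R^{1/p^e}$-module structure and contracting, whereas the paper treats this in a remark after the proof. Neither approach is more general, but yours is a tidier packaging of the same computation.
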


\begin{proof}
\underline{$1 \Rightarrow 2$:} Suppose $P \in \bigcup \Ass_R(F^e(M) / N_e)$ with $ P = \big( N_e :_R m_e \big)$.  I.e. for all $r \in P$, we have $ r \circ m_e =  r^{\frac{1}{q}} m_e \in N_e$.  Equivalently, $ r^{\frac{1}{q}} m_\infty = 0$ in $(R^\infty \otimes_R M)/N$.  Thus $ P R^\infty \subset P^{[\frac{1}{q}]} R^\infty \subseteq ( N :_{R^\infty} m_\infty)$. 

\indent On the other hand, fix some $ s^{\frac{1}{q'}} \in ( N :_{R^\infty} m_\infty )$.  If $q' \leq q$, then $s^{\frac{q}{q'}} \in R$ and $ s^{\frac{q}{q'}} \circ m_e = s^{\frac{1}{q'}} m_e \in N_e$.  Hence  $s^{\frac{q}{q'}} \in P$, and $s^{\frac{1}{q'}} \in P^\infty$. 

\indent  However if $q' > q$, consider the image of $ s \circ m_e = {s}^{\frac{1}{q}}m_e$ under the natural map $g_{e, e'} : F^e(M) \rightarrow F^{e'}(M)$:

\begin{align*}
g_{e,e'} ({s}^{\frac{1}{q}}m_e) &=  {s}^{\frac{1}{q}}m_{e'} \\
&= s'^{\frac{q'}{q}} \circ m_{e'} \\
&=  s'^{(\frac{q'}{q} - 1)}s' \circ m_{e'} \\
&= s'^{(\frac{q'}{q} - 1)} \circ {s}^{\frac{1}{q'}}m_{e'} \\
\end{align*}

\noindent But ${s}^{\frac{1}{q'}}m_{e'} \in N_{e'} = N_{e'}^F$ (see remark~\ref{rmk:Fseqfacts} fact $2$), because $ s^{\frac{1}{q'}} \in  ( N :_{R^\infty} m_\infty )$.  Therefore $g_{e, e'}({s} \ \circ \m_e) \in N_{e'}$, and $s \ \circ \ m_e \in N_e$ because $N_e = N_e^F$.  Thus $ s \in (N_e :_R m_e )$, whereby $s \in P$ and $s^{\frac{1}{q'}} \in P^\infty$.

\indent We have now shown that $ PR^\infty \subseteq ( N :_{R^\infty} m_\infty) \subseteq P^\infty$.  Since $\sqrt{P R^\infty} = P^\infty$, $P^\infty$ is minimal over $P R^\infty$.  Hence $P^\infty$ is minimal over $( N :_{R^\infty} m_\infty)$ as well.  This fact shows that $P^\infty$ is minimal over the annihilator of an element of $(R^\infty \otimes_{R^\infty} M)/N$, and therefore $P^\infty \in \wAss_R \big( (R^{\infty} \otimes_R M)/N \big)$. 

\underline{$2 \Rightarrow 3$:} Always true, as stated above. 

\underline{$3 \Rightarrow 1$:} Suppose $P^\infty \in \sK_R \big( (R^\infty \otimes_R M)/N \big)$, then $PR^\infty$ is a finitely generated sub-ideal, and we have $ PR^\infty \subseteq ( N :_{R^{\infty}} m_\infty ) \subseteq P^\infty$ for some $m_\infty$.  If $P = (r_1, \ldots, r_n)$, then for each $i$ there exists a $q_i$ such that $r_i^{\frac{1}{q_i}}m_{e_i} \in N_{e_i}$.  Letting $q = \text{min} \lbrace q_i \rbrace$, we have $P \subseteq ( N_e :_{R} m_e )$.

\indent On the other hand, fix $ s \in ( N_e :_{R} m_e )$, whereby $s \circ m_e = s^\frac{1}{q}m_e \in N_e$.  Again by definition, $ s^\frac{1}{q} \in ( N :_{R^{\infty}} m_\infty) \subseteq P^\infty$.  Thus $s \in P^\infty \cap R = P$.   Therefore $( N_e :_{R} m_e) \subseteq P$. 

\indent We now have $ P = ( N_e :_{R} m_e)$, and $P \in \Ass_R \big( F^e(M) / N_e \big)$. 
\end{proof}

\begin{rmk}
In the proof we use the fact that $P \in \Ass_R(F^e(M)/N_e)$ under left action.  However, consider the localized module $(F^e(M)/N_e)_P$ over $R_P$, then this module has left depth $0$, i.e. there exist no left-regular elements for this module in $R_P$.  Fix $r \in PR_P$, then $r$ is left-regular over $(F^e(M)/N_e)_P$ if and only if $r^q$ is left-regular if an only if $r$ is right-regular.  The first statement is true by properties of regular sequences (specifically here of length $1$)~\cite[Theorem 5.1.3]{No-FFR}, and the second statement is true because left action by $r^q$ is equivalent to right action by $r$.  Therefore $(F^e(M)/N_e)_P$ over $R_P$ also has no right-regular elements, and $\Ass_R \big( F^e(M) / N_e \big)$ is a two-sided concept.
\end{rmk}

\indent  As stated in example~\ref{example:Fseqs}, $\lbrace 0^F_{F^e(M)} \rbrace$ is an $F$-sequence.  We therefore have a particular case of theorem~\ref{thm:Fseqweakass} which more directly addresses theorem~\ref{thm:Neil}.

\begin{cor}\label{cor:WeakAss}
Let $R$ be a reduced Noetherian ring of characteristic $p > 0$, and let $R^\infty$ be its perfect closure.  Let $P$ and $P^\infty$ be corresponding prime ideals in $\Spec(R)$ and $\Spec(R^\infty)$ respectively.  Let $M$ be an $R$-module.  Then the following are all equivalent:  
\begin{enumerate}
\item $P \in \bigcup \Ass_R \big( F^e(M) / 0^F_{F^e(M)} \big)$
\item $P^\infty \in \wAss_{R^\infty}(R^{\infty} \otimes_R M)$
\item $P^\infty \in \sK_{R^\infty}(R^{\infty} \otimes_R M)$
\end{enumerate}
\end{cor}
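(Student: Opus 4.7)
The plan is to view Corollary \ref{cor:WeakAss} as an immediate specialization of Theorem \ref{thm:Fseqweakass}, so almost no new work should be required. First I would check that $\{N_e\} := \{0^F_{F^e(M)}\}$ meets the hypotheses of that theorem: it is an $F$-sequence of submodules of $F^e(M)$, which was already recorded in example~\ref{example:Fseqs}~(3) via the easy equivalence $m_e \in 0^F_{F^e(M)} \Leftrightarrow g_e(m_e) \in 0^F_{F^{e+1}(M)}$. So the hypotheses of theorem~\ref{thm:Fseqweakass} are met.

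Next I would identify the direct-limit submodule $N = \underrightarrow{\lim}\, N_e$ inside $R^\infty \otimes_R M$. By remark~\ref{rmk:Zero}, $m_e \in 0^F_{F^e(M)}$ is equivalent to $m_\infty = 0$ in $R^\infty \otimes_R M$, so $N = 0$. Consequently the quotient $(R^\infty \otimes_R M)/N$ appearing in theorem~\ref{thm:Fseqweakass} is literally $R^\infty \otimes_R M$, which is exactly the module appearing in the corollary.

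Now I would simply plug these identifications into the three equivalent conditions of theorem~\ref{thm:Fseqweakass}: condition (1) becomes $P \in \bigcup \Ass_R\bigl(F^e(M)/0^F_{F^e(M)}\bigr)$, condition (2) becomes $P^\infty \in \wAss_{R^\infty}(R^\infty \otimes_R M)$, and condition (3) becomes $P^\infty \in \sK_{R^\infty}(R^\infty \otimes_R M)$. These are precisely statements (1)--(3) of the corollary, so their equivalence follows at once.

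There is no genuine obstacle here, since everything reduces to the prior theorem together with two bookkeeping facts already established in the paper. The only thing worth double-checking is that the convention $N = 0$ truly yields $(R^\infty \otimes_R M)/N = R^\infty \otimes_R M$ (not some quotient by a larger Frobenius-closure-like object), and this is exactly what remark~\ref{rmk:Zero} guarantees.
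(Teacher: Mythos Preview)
Your proposal is correct and follows exactly the same approach as the paper: set $\{N_e\} = \{0^F_{F^e(M)}\}$, note that this is an $F$-sequence with $N = \underrightarrow{\lim}\,N_e = 0$, and invoke Theorem~\ref{thm:Fseqweakass}. The paper's proof is simply a terser version of what you wrote.
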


\begin{proof}
\indent Let $\lbrace N_e \rbrace = \lbrace 0^F_{F^e(M)} \rbrace$, and $N = 0 =  \underrightarrow{\lim} \ 0^F_{F^e(M)}$.  The result follows from theorem~\ref{thm:Fseqweakass}.
\end{proof}

\indent Another particular case of theorem~\ref{thm:Fseqweakass} is when $\lbrace J_e \rbrace$ is an $f$-sequence of ideals in a Noetherian ring. 

\begin{cor}\label{thm:CycModAssocPr}
Let $R$ be a reduced Noetherian ring of characteristic $p > 0$, and let $R^\infty$ be its perfect closure.  Let  $\lbrace J_e \rbrace$ be an $f$-sequence in $R$, and let $J \subset R^\infty$ be its corresponding ideal in $R^\infty$. Let $P$ and $P^\infty$ be corresponding prime ideals in $\Spec(R)$ and $\Spec(R^\infty)$ respectively.  Then the following are all equivalent:  

\begin{enumerate}
\item $P \in \bigcup \Ass_R (R / J_e)$
\item $P^\infty \in \wAss_{R^\infty}(R^{\infty} / J)$
\item $P^\infty \in \sK_{R^\infty}(R^{\infty} / J$)
\end{enumerate} 
\end{cor}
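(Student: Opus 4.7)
The plan is to derive this corollary as a direct specialization of Theorem~\ref{thm:Fseqweakass} applied to the $R$-module $M = R$. The key observation is that an $f$-sequence of ideals in $R$ is essentially the same datum as an $F$-sequence of submodules of $F^e(R)$, so the three equivalent conditions of the theorem translate into the three equivalent conditions of the corollary once the right identifications are made.

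First I would fix the identification $F^e(R) = R^{\frac{1}{f^e}} \otimes_R R \cong R^{\frac{1}{q}}$ (and, in the limit, $R^\infty \otimes_R R \cong R^\infty$), so that left $R$-action on $F^e(R)$ corresponds to the action $r \circ a^{\frac{1}{q}} = r^{\frac{1}{q}} a^{\frac{1}{q}}$ discussed in Section~\ref{sec:FF}. Under this identification, to the $f$-sequence $\lbrace J_e \rbrace$ I would associate the sequence of left $R$-submodules $N_e := J_e^{\frac{1}{q}} \subseteq R^{\frac{1}{q}} \cong F^e(R)$. The defining condition $g_e^{-1}(N_{e+1}) = N_e$ for an $F$-sequence then translates, under this identification, precisely into the $f$-sequence condition $f^{-1}(J_{e+1}) = J_e$, because the connecting map $g_e$ identifies with the Frobenius $f$ restricted to the appropriate subrings. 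The cokernel identifies as $F^e(R)/N_e \cong R^{\frac{1}{q}}/J_e^{\frac{1}{q}} \cong R/J_e$ as left $R$-modules, so $\Ass_R\bigl(F^e(R)/N_e\bigr) = \Ass_R(R/J_e)$ for every $e$.

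Next I would check that, under the bijections of Remark~\ref{rmk:Fseqfacts}~(4) and Remark~\ref{rmk:fseqfacts}~(4), the $F$-sequence $\lbrace N_e \rbrace$ corresponds to the submodule $N = \underrightarrow{\lim}\, N_e$ of $R^\infty \otimes_R R \cong R^\infty$, and this $N$ is precisely $J$. This is immediate from the explicit descriptions of the two correspondences: in both cases the $e$-th term is recovered by intersecting the image in $R^\infty$ with $R^{\frac{1}{q}}$ (equivalently, by pulling back along $f^e$ and intersecting with $R$). Hence $(R^\infty \otimes_R R)/N \cong R^\infty/J$ as $R^\infty$-modules.

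With these identifications in place, conditions $(1)$, $(2)$, $(3)$ of the corollary are literally conditions $(1)$, $(2)$, $(3)$ of Theorem~\ref{thm:Fseqweakass} applied to $M = R$ and the $F$-sequence $\lbrace N_e \rbrace$, so the equivalences transfer at once. The only real obstacle is bookkeeping: one must be careful that the left $R$-module structure used in Theorem~\ref{thm:Fseqweakass} matches the usual ideal-theoretic structure on $R/J_e$ under the identification $F^e(R) \cong R^{\frac{1}{q}}$, and that the transition maps $g_e$ line up with the Frobenius endomorphism $f$ defining the $f$-sequence. Once this compatibility is verified, no further argument is needed.
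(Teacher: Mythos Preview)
Your proposal is correct and follows exactly the paper's approach: the paper's proof simply says ``Letting $M = R$, $\lbrace N_e \rbrace = \lbrace J_e \rbrace$, and $N = J$ we see that this corollary is a special case of theorem~\ref{thm:Fseqweakass}.'' You have supplied the bookkeeping (the identification $F^e(R)\cong R^{\frac{1}{q}}$, $N_e = J_e^{\frac{1}{q}}$, compatibility of $g_e$ with $f$) that the paper leaves implicit, but the argument is the same.
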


\begin{proof}
Letting $M = R$, $\lbrace N_e \rbrace = \lbrace J_e \rbrace$, and $N = J$ we see that this corollary is a special case of theorem \ref{thm:Fseqweakass}.
\end{proof}

\indent Lastly for an ideal $I \subseteq R$, we have the particular case for the minimal $f$-sequence $\lbrace (I^{[q]})^F \rbrace$.

\begin{cor} Let $R$ be a reduced Noetherian ring of characteristic $p > 0$, let $I \subset R$ be an ideal, and let $R^\infty$ be its perfect closure.  Let $P$ and $P^\infty$ be corresponding prime ideals in $\Spec(R)$ and $\Spec(R^\infty)$ respectively.  Then the following are all equivalent:  
\begin{enumerate}
\item $P \in \bigcup \Ass_R (R / (I^{[q]})^F)$
\item $P^\infty \in \wAss_{R^\infty}(R^{\infty} / I R^{\infty})$
\item $P^\infty \in \sK_{R^\infty}(R^{\infty} / I R^{\infty})$
\end{enumerate}
\end{cor}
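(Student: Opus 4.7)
The plan is to observe that this statement is essentially an immediate instance of the preceding corollary~\ref{thm:CycModAssocPr}, once one identifies the correct $f$-sequence and its corresponding limit ideal in $R^\infty$.

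First, I would invoke Example~\ref{example:fseqs}, item $(1)$, which states that $\lbrace J_e \rbrace := \lbrace (I^{[q]})^F \rbrace$ is an $f$-sequence of ideals in $R$, whose corresponding ideal in $R^\infty$ under the bijective correspondence of Remark~\ref{rmk:fseqfacts}, fact $(4)$, is $J := IR^\infty$. The only thing to really verify here is that this sequence is genuinely an $f$-sequence, i.e.\ that $f^{-1}((I^{[p^{e+1}]})^F) = (I^{[p^e]})^F$. The containment $\subseteq$ follows because if $r^p \in (I^{[p^{e+1}]})^F$, then $(r^p)^{q'} = r^{pq'} \in I^{[p^{e+1} q']}$ for some $q'$, which gives $r \in (I^{[p^e]})^F$ upon rewriting the exponent; and $\supseteq$ is immediate since raising to the $p$-th power sends $(I^{[p^e]})^F$ into $(I^{[p^{e+1}]})^F$.

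Next, I would apply corollary~\ref{thm:CycModAssocPr} directly with this $f$-sequence $\lbrace J_e \rbrace = \lbrace (I^{[q]})^F \rbrace$ and its corresponding ideal $J = IR^\infty$. That corollary states the three-way equivalence between $P \in \bigcup \Ass_R(R/J_e)$, $P^\infty \in \wAss_{R^\infty}(R^\infty/J)$, and $P^\infty \in \sK_{R^\infty}(R^\infty/J)$, which upon substitution is exactly the claim of the present corollary.

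There is essentially no obstacle here beyond verifying the identification of $\underrightarrow{\lim} \ (I^{[q]})^F$ with $IR^\infty$, which follows because $IR^\infty = \bigcup_e I R^{1/q}$ and the correspondence $\Gamma$ sends an ideal $J \subseteq R^\infty$ to $J_e = \lbrace r \in R \mid r^{1/q} \in J \rbrace$; applied to $J = IR^\infty$, one checks that $r \in J_e$ iff $r^{1/q} \in IR^\infty$ iff $r \in I^{[q]} R^\infty \cap R = (I^{[q]})^F$, the last equality being the alternate definition of the Frobenius closure recorded in section~\ref{sec:NotCon}. Hence the result follows as a direct specialization, with no new ingredients required.
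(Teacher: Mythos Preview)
Your proposal is correct and follows essentially the same approach as the paper: the paper's proof simply sets $J_e = (I^{[q]})^F$ and $J = IR^\infty$ and invokes corollary~\ref{thm:CycModAssocPr} (and alternately theorem~\ref{thm:Fseqweakass}). You have supplied more verification detail than the paper does, but the route is identical.
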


\begin{proof}
\indent Letting $J = I R^\infty$, and $J_e = (I^{[q]})^F$ for all $e$, we see this corollary is a special case of corollary~\ref{thm:CycModAssocPr}.

\indent Alternately, letting $N_e = (I^{[q]})^F$ and $N = IR^\infty$, we see the corollary is a special case of theorem~\ref{thm:Fseqweakass}.
\end{proof}

\section{Results over $F$-pure rings}\label{sec:F-pureResults}

\subsection{Stabilizing depth}\label{sec:SDepth}

\indent \indent In an $F$-pure ring, since all ideals $I \subseteq R$ are Frobenius closed (see Remark~\ref{rmk:Fp-Fcl}), $\lbrace (I^{[q]})^F \rbrace = \lbrace I^{[q]} \rbrace$ is always an $f$-sequence.  As stated in remark~\ref{rmk:fseqfacts}, $\Ass_R(R / I^{[q]})$ can  gain elements as $q$ grows.  We ask: is depth$_R(R/I^{[q]})$ non-increasing as $q$ grows?  Or considering that

\begin{align*}
R / I^{[p]} &\cong R^{\frac{1}{q}} / I R^{\frac{1}{q}} \\
&\cong (R^{\frac{1}{p}} \otimes_R R / I) \\
&= F(R / I)
\end{align*} 

\noindent we can more generally ask: is depth$_R\big(F^e(M)\big)$ non-increasing as $e$ grows?

\indent We must first state a lemma addressing the differing left and right actions of $R$ on $F(M)$.

\begin{lemma}\label{lem:twosidereg}
Let $R$ be an $F$-pure Noetherian ring of characteristic $p > 0$.  Let $M$ be an $R$-module, and let $\mbf{x} = x_1, \ldots, x_n \subset R$ be a finite sequence.  Then for any $e \in \mathbb{N}$, $\mbf{x}$ is a regular left  $F^e(M)$-sequence if and only if $\mbf{x}$ is a regular right $F^e(M)$-sequence.
\end{lemma}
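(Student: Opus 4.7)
The plan is to exploit the identification, noted in the remark following Theorem~\ref{thm:Fseqweakass}, that right multiplication by $r \in R$ on $F^e(M)$ coincides with left multiplication by $r^q$: both amount to left multiplication by $r \in R \subseteq R^{1/q}$ on the underlying $R^{1/q}$-module $R^{1/q} \otimes_R M$. Consequently, $\mathbf{x} = x_1, \ldots, x_n$ is a right-regular $F^e(M)$-sequence if and only if $\mathbf{x}^{[q]} = x_1^q, \ldots, x_n^q$ is a left-regular $F^e(M)$-sequence, and the lemma reduces to showing that with respect to the left action, $\mathbf{x}$ is $F^e(M)$-regular if and only if $\mathbf{x}^{[q]}$ is.

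The forward implication is a general property of regular sequences, valid over any commutative ring: one proves it inductively from the length-one fact that if $a$ is $N$-regular then so is $a^k$ for any $k \geq 1$, combined with the isomorphism $a N / a^k N \cong N / a^{k-1} N$ (valid when $a$ is $N$-regular) to step up to longer sequences. No $F$-purity is needed here.

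For the reverse implication I proceed by induction on $n$. The base case is the classical observation: if $am = 0$ with $m \neq 0$ then $a^q m = a^{q-1}(am) = 0$, and conversely if $a^q m = 0$ with $m \neq 0$, choosing the minimal $k$ with $a^k m = 0$ and $a^{k-1} m \neq 0$ produces a witness of zero-divisibility for $a$. For the inductive step, assuming $\mathbf{x}^{[q]}$ is left-regular, the hypothesis applied to $x_1^q, \ldots, x_{n-1}^q$ gives that $x_1, \ldots, x_{n-1}$ is left-regular; it remains to show $x_n$ is left-regular on $F^e(M) / (x_1, \ldots, x_{n-1}) \circ F^e(M)$. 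Here $F$-purity enters crucially: since every ideal of $R$ is Frobenius closed (Remark~\ref{rmk:Fp-Fcl}), the submodules $(x_1, \ldots, x_{n-1}) \circ F^e(M)$ and $(x_1^q, \ldots, x_{n-1}^q) \circ F^e(M)$ are compatible under Frobenius closure in $F^e(M)$, which allows the left-regularity of $x_n^q$ on the latter quotient (given by hypothesis) to be transferred to left-regularity of $x_n$ on the former via the length-one argument applied inside that quotient.

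The main obstacle is exactly this last transfer: the two natural quotients arising from the left and right actions are a priori different modules, and controlling the passage between them is where the Frobenius-closedness of ideals afforded by $F$-purity is indispensable. Without it, the reverse implication ``regularity of $\mathbf{x}^{[q]}$ implies regularity of $\mathbf{x}$'' can fail for sequences of length $\geq 2$ in general commutative rings, so the $F$-purity hypothesis is essential rather than cosmetic.
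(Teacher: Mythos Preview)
Your reduction is exactly the paper's: identify right action by $x_j$ with left action by $x_j^q$, so that the lemma becomes the purely one-sided statement that $\mathbf{x}$ is left-regular on $F^e(M)$ if and only if $\mathbf{x}^{[q]}$ is. The paper then dispatches \emph{both} directions of this equivalence by a single citation to Northcott \cite[Theorem~5.1.3]{No-FFR}, which asserts that for any module $N$ over any commutative ring and any positive integers $a_i$, the sequence $x_1,\ldots,x_n$ is $N$-regular if and only if $x_1^{a_1},\ldots,x_n^{a_n}$ is. No $F$-purity is invoked anywhere in the paper's proof.

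Your forward direction is fine. The problem is the reverse direction. Your claim that ``regularity of $\mathbf{x}^{[q]}$ implies regularity of $\mathbf{x}$'' can fail in general commutative rings is incorrect: this implication is part of Northcott's theorem and holds with no hypotheses beyond commutativity. (One can prove it directly by induction: given $m\notin (x_1,\ldots,x_{i-1})M$ with $x_i m\in (x_1,\ldots,x_{i-1})M$, the element $x_1^{q-1}m$ lies outside $(x_1^q,x_2,\ldots,x_{i-1})M$ --- this uses only that $x_1,\ldots,x_{i-1}$ is already known to be $M$-regular by the inductive step, hence so is $x_1^{q-1},x_2,\ldots,x_{i-1}$ --- while $x_i\cdot x_1^{q-1}m\in (x_1^q,x_2,\ldots,x_{i-1})M$, contradicting regularity of $x_i$ on the larger quotient.) Consequently your appeal to $F$-purity is unnecessary, and the mechanism you sketch --- that the submodules $(x_1,\ldots,x_{n-1})\circ F^e(M)$ and $(x_1^q,\ldots,x_{n-1}^q)\circ F^e(M)$ are ``compatible under Frobenius closure'' --- is left vague and does not, as written, explain how left-regularity of $x_n^q$ on one quotient transfers to left-regularity of $x_n$ on the \emph{other} quotient; the length-one argument only compares $x_n$ and $x_n^q$ on the \emph{same} module. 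So the inductive step as you have it is incomplete, and the hypothesis you lean on is not the right tool. Replacing your reverse-direction paragraph with the citation to Northcott (or the elementary argument above) yields the paper's proof.
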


\begin{proof} 
\indent Fix $e \in \mathbb{N}$.  It is easy to see that $\mbf{x}$ is a left-regular $F^e(M)$-sequence if and only if $\mbf{x}^{[q]} = x_1^q, \ldots, x_n^q$ is a left-regular $F^e(M)$-sequence if and only if $\mbf{x}$ is a right-regular $F^e(M)$-sequence. 

\indent The first equivalence is true by properties of regular sequences \cite[Theorem 5.1.3]{No-FFR}, whereby $\mbf{x}$ is a left-regular $F^e(M)$ if and only if $\mbf{x}^{[q]}$ is a left-regular $F^e(M)$ sequence.  

\indent The second equivalence is true because for any $m_e \in F^e(M)$, the left action by any $x_j^q$ for $j = 1, \ldots, n$ is equivalent to right action by $x_j$.  That is,

$$\begin{array}{cl}
x_j^q \circ m_e  &=  x_j m_e \\
&= m_e x_j \\
&= m_e \cdot x_j
\end{array}$$
\end{proof}

\indent We then have a corollary.

\begin{lemma}\label{lem:twosidedepth}
Let $(R,\mathfrak{m})$ be an $F$-pure Noetherian local ring of characteristic $p > 0$.  Let $M$ be an $R$-module, and let $F^e(M)$ denote the image of $M$ under the $e$-th iteration of the Frobenius functor for $e \in \mathbb{N}$.  Then $\leftdepth_R F^e(M) = \rightdepth_R F^e(M)$.  Two-sided $\depth_R F^e(M)$ is therefore a well-defined concept.
\end{lemma}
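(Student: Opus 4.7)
The plan is to derive this lemma as an essentially immediate corollary of Lemma~\ref{lem:twosidereg}. Recall that the (left) depth $\leftdepth_R F^e(M)$ is the supremum over lengths $|\mbf{x}|$ of finite sequences $\mbf{x} \subseteq \mathfrak{m}$ which are left-regular on $F^e(M)$, and similarly for right depth. So the strategy is to show that the collections of left-regular and right-regular sequences in $\mathfrak{m}$ coincide set-theoretically, at which point the two suprema are trivially equal.

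First, I would fix $e \in \mathbb{N}$ and let $\mbf{x} = x_1, \ldots, x_n \subseteq \mathfrak{m}$ be any finite sequence. By Lemma~\ref{lem:twosidereg}, $\mbf{x}$ is left-regular on $F^e(M)$ if and only if it is right-regular on $F^e(M)$. Consequently, for every $n \in \mathbb{N}$,
\[
\{\mbf{x} \subseteq \mathfrak{m} \mid |\mbf{x}| = n \text{ and } \mbf{x} \text{ is left-regular on } F^e(M)\} = \{\mbf{x} \subseteq \mathfrak{m} \mid |\mbf{x}| = n \text{ and } \mbf{x} \text{ is right-regular on } F^e(M)\}.
\]
Taking the supremum of $|\mbf{x}|$ over either collection yields the same value, which gives $\leftdepth_R F^e(M) = \rightdepth_R F^e(M)$.

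Since the two one-sided notions of depth agree, the two-sided depth $\depth_R F^e(M)$ is unambiguously defined as this common value. There is no real obstacle here; the entire content is packaged inside Lemma~\ref{lem:twosidereg}, and the only thing to verify is that the passage from ``same regular sequences'' to ``same depth'' is justified directly by the definition of depth as the supremum of lengths of regular sequences in the maximal ideal. I would also remark that this argument does not require finite generation of $F^e(M)$ on either side, so no additional hypotheses (such as $F$-finiteness) need to be imposed.
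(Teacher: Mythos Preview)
Your proposal is correct and matches the paper's own proof almost verbatim: the paper simply invokes Lemma~\ref{lem:twosidereg} to conclude that a sequence $\mbf{x}\subset R$ is left $F^e(M)$-regular if and only if it is right $F^e(M)$-regular, whence the maximal (equivalently, supremal) lengths agree. Your added remark about not needing finite generation or $F$-finiteness is a fine observation, but the core argument is identical.
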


\begin{proof}
\indent By lemma~\ref{lem:twosidereg}, $\mbf{x} \subset R$ is left $F^e(M)$-regular if and only if it is right $F^e(M)$-regular.  Therefore all maximal such sequences have the same length under right and left action.
\end{proof}

\indent Our next lemma is a fact regarding all pure submodule inclusions.

\begin{lemma}\label{lem:puresubreg}
Let $(R, \mathfrak{m})$ be a reduced Noetherian local ring.  Let $M \subseteq N$ be a pure inclusion of $R$-modules such that $M \neq 0$ is finitely generated over $R$.  Let the finite set $\mbf{x} \subset R$ be an $N$-regular sequence.  Then $\mbf{x}$ is regular over $M$ as well.
\end{lemma}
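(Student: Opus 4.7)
The plan is to induct on the length of $\mathbf{x}$, using purity to pass regularity from $N$ down to $M$ at each stage, and Nakayama's lemma together with finite generation of $M$ to get the non-triviality condition $(\mathbf{x})M \neq M$.

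First I would observe that the elements $x_i$ must all lie in $\mathfrak{m}$. Indeed, since $\mathbf{x}$ is an $N$-regular sequence we have $(\mathbf{x})N \neq N$; if any $x_i$ were a unit in $R$ then $(\mathbf{x}) = R$ and $(\mathbf{x})N = N$, a contradiction. In particular $(\mathbf{x}) \subseteq \mathfrak{m}$. Since $M \neq 0$ is finitely generated, Nakayama's lemma then yields $(\mathbf{x})M \neq M$, which handles the non-triviality condition in the definition of a regular sequence on $M$.

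Next I would set up the induction on the length $n$ of $\mathbf{x} = x_1,\dots,x_n$, proving at each stage that $x_k$ is a non-zero-divisor on $M/(x_1,\dots,x_{k-1})M$. The key observation is that purity is preserved under tensoring: applying $-\otimes_R R/(x_1,\dots,x_{k-1})$ to the pure inclusion $M \hookrightarrow N$ produces an injection
\[
M/(x_1,\dots,x_{k-1})M \hookrightarrow N/(x_1,\dots,x_{k-1})N.
\]
Given $\bar m \in M/(x_1,\dots,x_{k-1})M$ with $x_k \bar m = 0$, this relation persists in $N/(x_1,\dots,x_{k-1})N$; since $\mathbf{x}$ is $N$-regular, $x_k$ is a non-zero-divisor there, so the image of $\bar m$ is zero; but the induced map is injective, so $\bar m = 0$ in $M/(x_1,\dots,x_{k-1})M$. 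The base case $k=1$ is the same argument with the empty sequence: injectivity of $M \hookrightarrow N$ directly shows that any $N$-regular element is a non-zero-divisor on $M$.

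The main technical point is simply making sure to invoke purity at each step (rather than just the injectivity of the inclusion $M \subseteq N$), since without purity, quotienting by $(x_1,\dots,x_{k-1})$ could easily destroy injectivity. There is no serious obstacle beyond being careful about this, and invoking Nakayama's lemma to secure $(\mathbf{x})M \neq M$ from the hypothesis that $M$ is finitely generated and nonzero.
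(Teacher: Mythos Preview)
Your proof is correct and follows essentially the same approach as the paper: induct on the length of $\mathbf{x}$, use purity of $M\hookrightarrow N$ together with tensoring by $R/(x_1,\dots,x_{k-1})$ to obtain an injection of quotients, and invoke Nakayama's lemma (with $x_i\in\mathfrak{m}$) for the non-triviality condition. Your version is slightly more explicit in justifying $x_i\in\mathfrak{m}$, but otherwise the arguments coincide.
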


\begin{proof} 
We proceed by induction on $|\mbf{x}|$.  Let $\mbf{x} = x$ be a singleton.  Then $M / xM \neq 0$ by Nakayama's lemma since $x \in \mathfrak{m}$ and $M \neq 0$. Furthermore, $x$ clearly annihilates no element of $M \subset N$.  

\indent Now assume the statment is true for $|\mbf{x}| = n - 1$.  Let $|\mbf{x}| = n$, and let $\mbf{x}' = x_1, \ldots, x_{n-1}$ denote the first $n-1$ elements of $\mbf{x}$.  Again, $M / \mbf{x}' M \neq 0$ by Nakayama's lemma since $\mbf{x}' \subset \mathfrak{m}$ and $M \neq 0$.  

Additionally, $M / \mbf{x}' M$ embeds into $N / \mbf{x}' N$.  In order to see this fact, recall that $M / \mbf{x}' M \cong M \otimes_R R / (\mbf{x}')$, and $N / \mbf{x}' N \cong N \otimes_R R / (\mbf{x}')$, and since $M$ is a pure sub $N$-module we have the embedding $M \otimes_R R / (\mbf{x}') \hookrightarrow N \otimes_R R / (\mbf{x}')$.

Thus since $x_n$ annihilates no element of $N / \mbf{x}' N$, it will therefore annihilate no element of the submodule $M / \mbf{x}' M$.  Therefore, $\mbf{x}$ is an $M$-regular sequence.
\end{proof}

\indent In this context, we now have an answer to our question above.

\begin{lemma}\label{lem:submodreg}
Let $(R, \mathfrak{m})$ be a local Noetherian $F$-pure ring of characteristic $p > 0$, and let $M$ be a finitely generated $R$-module.  If $\mbf{x} = x_1, \ldots, x_n$ is regular over $F(M)$, then it is regular over $M$.
\end{lemma}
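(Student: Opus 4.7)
The plan is to deduce this from the $F$-purity of $R$ combined with Lemma~\ref{lem:puresubreg}.

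First I would invoke the observation made in Section~\ref{sec:F-pure} that, since $R$ is $F$-pure, the natural map $M \hookrightarrow F(M)$ given by $m \mapsto (1 \otimes_R m)_1$ is a pure inclusion of $R$-modules, using the right $R$-action on $F(M)$ inherited from $R \subseteq R^{\frac{1}{p}}$. This was already established in the displayed computation in that section, which rewrites $F(M) \otimes_R N \cong F(M \otimes_R N)$ and then invokes $F$-purity to conclude injectivity after tensoring with an arbitrary $R$-module $N$.

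Next, assuming without loss of generality that $M \neq 0$, I would apply Lemma~\ref{lem:puresubreg} to the pure inclusion $M \subseteq F(M)$ with $M$ finitely generated. The hypothesis that $\mbf{x}$ is $F(M)$-regular forces $\mbf{x} \subset \mathfrak{m}$, since otherwise some $x_i$ would be a unit in the local ring $R$ and we would have $F(M)/\mbf{x}F(M) = 0$, contradicting regularity. Lemma~\ref{lem:puresubreg} then yields directly that $\mbf{x}$ is regular on $M$.

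The only subtle point is that ``regular over $F(M)$'' in the hypothesis does not a priori specify left or right $R$-action, but Lemma~\ref{lem:twosidereg} guarantees that under $F$-purity the two notions coincide, so the right-regular version needed to match the right-module inclusion $M \hookrightarrow F(M)$ is automatically available. I do not anticipate a real obstacle; the entire argument is essentially a two-line combination of ($F$-purity implies $M$ embeds as a pure right $R$-submodule of $F(M)$) with (regular sequences descend from a module to any finitely generated pure submodule in the local setting, as in Lemma~\ref{lem:puresubreg}).
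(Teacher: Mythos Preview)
Your proposal is correct and follows essentially the same route as the paper: observe that $F$-purity makes $M \subseteq F(M)$ a pure submodule, then invoke Lemma~\ref{lem:puresubreg}. Your additional remarks about $M \neq 0$, $\mbf{x} \subset \mathfrak{m}$, and the left/right issue via Lemma~\ref{lem:twosidereg} are reasonable clarifications but are not explicitly spelled out in the paper's two-line proof.
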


\begin{proof}
\indent $M \subseteq F(M)$ is a pure submodule.  Hence by~\ref{lem:puresubreg}, if $\mbf{x}$ is regular over $F(M)$, it is also regular over $M$.
\end{proof}

\indent So for an $F$-pure ring, we do have non-increasing depth under the Frobenius functor.

\begin{theorem}
Let $(R, \mathfrak{m})$ be a local Noetherian $F$-pure ring of characteristic $p > 0$, and let $M$ be a finitely generated $R$-module.  Then,

\begin{center}
 $\depth_R (M) \geq \depth_R \big( F(M) \big)$.
\end{center}
\end{theorem}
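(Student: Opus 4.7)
The plan is to apply Lemma~\ref{lem:submodreg} directly and reduce the depth inequality to a statement about regular sequences. First, I would observe that by Lemma~\ref{lem:twosidedepth}, depth of $F(M)$ over $R$ is a well-defined two-sided notion, so it suffices to work with, say, left regular sequences; and since $F(M)$ is finitely generated as a left $R$-module (Remark~\ref{rmk:freakout}) and $R$ is Noetherian, the depth $\depth_R(F(M))$ is realized by a genuine maximal regular sequence of finite length.

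Next, I would choose a regular sequence $\mathbf{x} = x_1, \ldots, x_n \subseteq \mathfrak{m}$ on $F(M)$ with $n = \depth_R(F(M))$. By Lemma~\ref{lem:submodreg}, since $R$ is $F$-pure and $M$ is finitely generated, the same sequence $\mathbf{x}$ is regular on $M$. Thus $M$ admits a regular sequence of length $n$ in $\mathfrak{m}$, which forces
\[
\depth_R(M) \geq n = \depth_R(F(M)).
\]
Finally I would briefly handle the trivial case $M = 0$ (in which case $F(M) = 0$ and both depths are, by convention, equal, so the inequality is vacuous), and the case $\depth_R(F(M)) = \infty$, which cannot happen since $\mathfrak{m}$ is finitely generated and $F(M) \neq \mathfrak{m} F(M)$ by purity of $M \hookrightarrow F(M)$ together with Nakayama applied to $M$.

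There is essentially no main obstacle here: the heavy lifting is done entirely by Lemma~\ref{lem:submodreg}, which in turn relies on the purity of the inclusion $M \hookrightarrow F(M)$ guaranteed by the $F$-purity hypothesis. The only bookkeeping point worth noting is that one must verify the chosen sequence really lies in $\mathfrak{m}$ and that the passage from "regular on $F(M)$" to "regular on $M$" preserves the sequence, but both are immediate from the cited lemma and the standard definition of depth.
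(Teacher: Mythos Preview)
Your proposal is correct and follows essentially the same approach as the paper's proof: choose a maximal $F(M)$-regular sequence of length $d = \depth_R(F(M))$, apply Lemma~\ref{lem:submodreg} to conclude it is $M$-regular, and deduce $\depth_R(M) \geq d$. The paper's version is terser and omits the trivial-case bookkeeping you include, but the argument is the same.
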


\begin{proof}
\indent If $\depth_R \big( F(M) \big) = d$ and $x_1, \ldots, x_d$ is a maximal $F(M)$-sequence, then by lemma~\ref{lem:submodreg}  $x_1, \ldots, x_d$ is $M$-regular, and $\depth_R (M) \geq d$.
\end{proof}

\indent Recall that $F^e(M)$ is always finitely generated as left $R$-module (see remark~\ref{rmk:freakout}).  The identical argument shows $\depth_R \big( F^e(M) \big) \geq \depth_R \big( F^{e+1}(M) \big)$ for every $e$.  I.e., $\depth_R \big( F^e(M) \big)$ is non-increasing $e$ grows.  Since it is bounded below by $0$, the value must stabilize, meaning there must exist some $d \in \mathbb{N}$ for which $\depth_R \big( F^e(M) \big) = d$ for $e \gg 0$.  This fact yields a definition.

\begin{defn}
Let $(R,\mathfrak{m})$ be an F-pure Noetherian local ring of characteristic $p > 0$, and let $M$ be a finitely generated $R$-module.  The \emph{stabilizing depth} is defined:

\begin{center}
$\sdepth_R (M) := \depth_R \big( F^e(M) \big) \ \text{for } \ e \gg 0$.
\end{center}
\end{defn}

\begin{example}\label{example:DecrDepth} For any $t \geq 0$, we can construct a class of examples of modules $M$ for which $\sdepth_R (M) > \sdepth_R (M) = t$.  

\indent Let $S$ be an $F$-pure Noetherian ring of characteristic $p > 0$ and $J \subset S$ be an ideal such that $\Ass_S(S/J^{[q]})$ grows as $q$ grows.  For an example for such cyclic modules, see \cite{SiSwUAss}.  Let $P \in \Ass_S (S / J^{[q]}) \setminus \Ass_S (S / J)$ for some $q$, and let $A = S_P$ be $S$ localized at $P$.  Letting $\mathfrak{a} = PA$ we have a local ring $(A, \mathfrak{a})$.  Now $\depth_{A} (A / J A) > 0 = \sdepth_{A} (A / J A) = \depth_{A} (A / J^{[q]} A)$.

\indent Now let $R := A \llbracket x_1, \ldots, x_t \rrbracket$ for variables $x_i$, which is local with maximal ideal $\mathfrak{m} = (x_1, \ldots, x_t) + \mathfrak{a}R$ \cite[Discussion on Page 4]{Mats}.  Let $I = JR$.  Then for all $q$, $\depth_R(R / I^{[q]}) = \depth_A(A / J^{[q]}) \ + \ t$ (follows from \cite[1.2.16]{BH}).  Hence $\depth_R(R / I) = \depth_A(A / J) + t > t$, but $\depth_R(R / I^{[q]}) = \depth_A(A / J^{[q]}) + t = 0 + t = t$ for $q \gg 0$.  Thus $\depth_R(R / I) > \sdepth_R(R / I) = t$.
\end{example}

\subsection{Depth type comparisons}\label{sec:DepthTypeComp}

\indent \indent We investigate how the stabilizing depth of a finitely generated module $M$ over an $F$-pure ring $R$ relates to the generalized depths of $(R^\infty \otimes_R M)$ over the usually non-Noetherian extension $R^\infty$.  We first compare $\sdepth_R (M)$ with $\cdepth_{R^\infty} (R^\infty \otimes_R M)$, but we begin with a lemma regarding regular sequences from $R$.

\begin{lemma}\label{lem:infreg}
Let $R$ be an $F$-pure Noetherian ring of characteristic $p > 0$, and let $R^\infty$ be its perfect closure.  Let $M$ be an $R$-module, and let $\mbf{x} = x_1, \ldots, x_n \subset R$ be a finite sequence.  Then, $\mbf{x}$ is a regular $F^e(M)$-sequence for all $e \in \mathbb{N}$ if and only if $\mbf{x}$ is a regular $(R^\infty \otimes_R M)$-sequence.
\end{lemma}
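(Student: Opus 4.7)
The plan rests on two consequences of $F$-purity together with the direct-limit description $R^\infty \otimes_R M = \underrightarrow{\lim}\, F^e(M)$. First, by $F$-purity each natural map $F^e(M) \hookrightarrow F^{e+1}(M)$ is a pure inclusion, and passing to the colimit yields the pure inclusion $F^e(M) \hookrightarrow R^\infty \otimes_R M$. Second, using right-exactness of the functor $R^{1/f^e} \otimes_R -$ together with the right-action formula $(a^{1/q} \otimes m) \cdot x_k = a^{1/q} \otimes x_k m$, one obtains canonical isomorphisms $F^e(M)/\mbf{x} F^e(M) \cong R^{1/f^e} \otimes_R (M/\mbf{x} M)$ for each $e$, and in the limit $(R^\infty \otimes_R M)/\mbf{x}(R^\infty \otimes_R M) \cong R^\infty \otimes_R (M/\mbf{x} M)$.

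For the forward direction, assume $\mbf{x}$ is $F^e(M)$-regular for every $e$. First I would verify non-vanishing: by the identification above, the quotient $(R^\infty \otimes_R M)/\mbf{x}(R^\infty \otimes_R M)$ contains a copy of $M/\mbf{x} M$ by $F$-purity, and this is nonzero from regularity on $M = F^0(M)$. For the non-zero-divisor condition on each $x_j$, I would lift any $\bar{m}_\infty$ killed by $x_j$ modulo $(x_1, \ldots, x_{j-1})$ to a representative $\bar{m}_e$ at a finite stage, note that the relation $x_j \bar{m}_{e'} = 0$ must already hold in $F^{e'}(M)/(x_1, \ldots, x_{j-1}) F^{e'}(M)$ at some level $e' \geq e$ (since any identity in the direct limit is witnessed at some finite stage), and apply regularity on $F^{e'}(M)$ to force $\bar{m}_{e'} = 0$, whence $\bar{m}_\infty = 0$.

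For the backward direction, assume $\mbf{x}$ is regular on $R^\infty \otimes_R M$ and fix $e$. The non-vanishing of $F^e(M)/\mbf{x} F^e(M)$ would follow by first deducing $M/\mbf{x} M \neq 0$ from $R^\infty \otimes_R (M/\mbf{x} M) \neq 0$, then using $F$-purity to embed $M/\mbf{x} M$ into $R^{1/f^e} \otimes_R (M/\mbf{x} M) \cong F^e(M)/\mbf{x} F^e(M)$. The non-zero-divisor condition on $x_j$ would transfer across the pure embedding $F^e(M)/(x_1, \ldots, x_{j-1}) F^e(M) \hookrightarrow (R^\infty \otimes_R M)/(x_1, \ldots, x_{j-1})(R^\infty \otimes_R M)$: any $\bar{m}_e$ annihilated by $x_j$ in the source maps to an element annihilated by $x_j$ in the target, where regularity forces it to vanish, and then injectivity of the embedding forces $\bar{m}_e = 0$.

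The main obstacle is establishing the identification $F^e(M)/\mbf{x} F^e(M) \cong R^{1/f^e} \otimes_R (M/\mbf{x} M)$, which is the technical hinge linking both directions. Verifying it requires a careful computation of the right action of $\mbf{x}$ on $R^{1/f^e} \otimes_R M$, being mindful of the $R$-bimodule structure on $R^{1/f^e}$, followed by right-exactness of $R^{1/f^e} \otimes_R -$ applied to the sequence $\mbf{x} M \to M \to M/\mbf{x} M \to 0$. A secondary subtlety is keeping track of left versus right $R$-actions on $F^e(M)$, but this is absorbed by the two-sided equivalence of regular sequences established in Lemma~\ref{lem:twosidereg}.
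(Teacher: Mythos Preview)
Your proposal is correct and follows essentially the same route as the paper. The forward direction is the paper's induction on $|\mbf{x}|$ unrolled index by index via the direct-limit description, and your backward direction is exactly the content of Lemma~\ref{lem:puresubreg} (which the paper simply cites) argued directly at each stage; the quotient identification $F^e(M)/\mbf{x}F^e(M) \cong R^{1/f^e} \otimes_R (M/\mbf{x}M)$ that you flag as the technical hinge is precisely the isomorphism the paper uses implicitly in its inductive step.
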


\begin{proof} 
\indent If $\mbf{x}$ is a regular $(R^\infty \otimes_R M)$-sequence, then by~\ref{lem:puresubreg}, $\mbf{x}$ is a regular right $F^e(M)$-sequence since $F^e(M) \hookrightarrow (R^\infty \otimes_R M)$ is a pure submodule by $F$-purity of $R$, and $\mbf{x}$ is left-regular if and only it is right-regular. 

\indent Conversely, we proceed by induction on $|\mbf{x}|$.  If $\mbf{x} = x$ is a singleton, suppose $x$ is not $(R^\infty \otimes_R M)$-regular, with $m_\infty x = 0$.  Then $0 = m_{e} x$ in some $F^{e}(M)$, and $0 \neq m_{e}$, since it was not zero in $(R^\infty \otimes_R M) = \underrightarrow{Lim} \ F^e(M)$.

\indent Now suppose $\mbf{x} = x_1, \ldots, x_n \subset R$ is $F^e(M)$-regular for all $e$, in particular right-regular, and let $\mbf{x}' = x_1, \ldots, x_{n-1}$, which is by hypothesis regular over $(R^\infty \otimes_R M)$.  We must show that $x_n$ is regular over

\begin{align*}
(R^\infty \otimes_R M) / (R^\infty \otimes_R M)\mbf{x}' & \cong (R^\infty \otimes_R M) \otimes_R R / \mbf{x}'R \\
& \cong R^\infty \otimes_R ( M \otimes_R R / \mbf{x}'R ) \\
& \cong R^\infty \otimes_R M / \mbf{x}'M
\end{align*}
 
\noindent We then apply the identical argument to the regular element $x_n$ over the module $F^e(M / \mbf{x}'M)$, which is therefore is also regular over the module $(R^\infty \otimes_R M / \mbf{x}'M)$.
\end{proof}

\indent  If $\bf x$ is a maximal regular sequence over $F^e(M)$ for some specific $e$, and $\bf y$ is a maximal regular sequence over $F^e(M)$ for all $e$, clearly $|\mbf{x}| \geq |\mbf{y}|$.

\begin{theorem}
Let $R$ be an $F$-pure Noetherian ring of characteristic $p > 0$, let $R^\infty$ be its perfect closure, and let $M$ be a finitely generated $R$-module.  Then $\cdepth_{R^\infty} (R^\infty \otimes_R M)$ is finite, and more specifically

\begin{center}
$\sdepth_R (M) \geq \cdepth_{R^\infty} (R^\infty \otimes_R M)$
\end{center}

\end{theorem}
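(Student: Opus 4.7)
The plan is to show that every right-regular $(R^\infty \otimes_R M)$-sequence contained in the maximal ideal $\mathfrak{m}^\infty$ of $R^\infty$ has length at most $\sdepth_R(M)$. Since $\sdepth_R(M)$ is a finite integer (being the depth of a finitely generated module over a Noetherian local ring), this bound simultaneously forces $\cdepth_{R^\infty}(R^\infty \otimes_R M)$ to be finite and gives the claimed inequality.

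First I would observe that the homeomorphism between $\Spec(R)$ and $\Spec(R^\infty)$ makes $R^\infty$ a local ring with unique maximal ideal $\mathfrak{m}^\infty$, and that $\mathfrak{m}^\infty \cap R = \mathfrak{m}$. Given any right-regular sequence $\mathbf{y} = y_1, \ldots, y_n \subset \mathfrak{m}^\infty$ on $R^\infty \otimes_R M$, I would pick $q = p^e$ large enough that each $y_i \in R^{1/q}$ and set $z_i := y_i^q \in \mathfrak{m}^\infty \cap R = \mathfrak{m}$. Appealing to the standard fact that taking positive powers coordinatewise preserves regular sequences, the sequence $\mathbf{z} = z_1, \ldots, z_n \subset R$ is still a right-regular $(R^\infty \otimes_R M)$-sequence. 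With $\mathbf{z} \subset R$ in hand, Lemma~\ref{lem:infreg} promotes this to right-regularity on every iterate $F^{e'}(M)$, and Lemma~\ref{lem:twosidedepth} (which uses $F$-purity) converts right-regularity to the classical left-action regularity, yielding $n \leq \depth_R(F^{e'}(M))$. Choosing $e' \gg 0$ gives $n \leq \sdepth_R(M)$, and passing to the supremum over all such $\mathbf{y}$ finishes the argument.

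The main obstacle I anticipate is bookkeeping between the left and right actions of $R$ on $F^e(M)$: Lemma~\ref{lem:infreg} is stated for sequences from $R$ and bridges the iterates $F^{e'}(M)$ with the limit $R^\infty \otimes_R M$ on the right-action side, while the classical $\depth_R(F^{e'}(M))$ used to define $\sdepth_R(M)$ is a left-action notion, so the $F$-pure hypothesis (through Lemma~\ref{lem:twosidedepth}) is essential for these to coincide. The other key technical move is the $q$-th power trick that replaces an arbitrary finite regular sequence in $R^\infty$ by one that actually lives in $R$, without destroying regularity on $R^\infty \otimes_R M$.
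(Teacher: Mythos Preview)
Your proposal is correct and follows essentially the same approach as the paper: take an arbitrary $(R^\infty \otimes_R M)$-regular sequence in $\mathfrak{m}^\infty$, replace it by its $q$-th powers to land in $R$ (via \cite[Theorem 5.1.3]{No-FFR}), and then apply Lemma~\ref{lem:infreg} to conclude regularity on every $F^e(M)$, bounding the length by $\sdepth_R(M)$. Your treatment is in fact slightly more careful than the paper's in explicitly invoking Lemma~\ref{lem:twosidedepth} to pass between right- and left-regularity, but the overall strategy is identical.
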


\begin{proof}
Let $\bf x'$ $= x_1^{\frac{1}{q_1}}, \ldots, x_n^{\frac{1}{q_n}} \subset R^\infty$ be an $(R^\infty \otimes_R M)$-regular sequence.  Then by properties of regular sequences  \cite[Theorem 5.1.3]{No-FFR}, $\bf x$ $:= x_1, \ldots, x_n \subset R \subseteq R^\infty$ is $(R^\infty \otimes_R M)$-regular as well.  By lemma~\ref{lem:infreg}, $\bf x$ is $F^e(M) $-regular for all $e$.  Hence any maximal $(R^\infty \otimes_R M)$-sequence can have at most the length of $\sdepth_R (M) = \depth_R \big( F^e(M) \big)$ for $e \gg 0$.
\end{proof}

\indent  After c depth, the next largest depth value of $(R^\infty \otimes_R M)$ over $R^\infty$ is the Koszul depth.  We find that it coincides with the stabilizing depth. 

\begin{ntn}
Before we discuss the next result, we make note of some notation used throughout its proof.  If $\mbf{y} \subset R^{\frac{1}{q}} \subseteq R^\infty$ is some finite sequence for some $q$ (perhaps even $\mbf{y} \subset R$), we can construct the koszul complex $K_{\bullet}(\mbf{y} ; F^{e'}(M))$ for any $e' \geq e$, since $R^{\frac{1}{q}} \subseteq R^{\frac{1}{q'}}$.  But we can also construct  $K_{\bullet}(\mbf{y} ; R^\infty \otimes_R M)$.  Since both such complexes are built from the same finite sequence $\mbf{y}$, the matrices defining the differentials $d_j$ are identical in each version.  When we discuss the differentials for the Koszul complex over $F^e(M)$ we write $d_j^q$, while the differentials over $R^\infty \otimes_R M$ are denoted $d_j^\infty$. 

\end{ntn}

\indent Lastly we require a lemma.

\begin{lemma}\label{lem:supkosz}
Let $(R,\mathfrak{m})$ be an F-pure Noetherian local ring of characteristic $p > 0$, let $(R^\infty, \mathfrak{m}^\infty)$ be its perfect closure, and let $M$ be a finitely generated $R$-module.  Then,

\begin{center}
$\kdepth_{R^\infty} (R^\infty \otimes_R M) = \ \sup \lbrace \kgr_{R^\infty} \big(\mathfrak{m}^{[\frac{1}{q}]}R^\infty, (R^\infty \otimes_R M) \big) \rbrace$ for all $e \in \mathbb{N}$.
\end{center}

\end{lemma}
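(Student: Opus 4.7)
The plan is to exploit the fact that $\mathfrak{m}^\infty$ is the nested union of the ideals $\mathfrak{m}^{[1/q]}R^\infty$ as $q=p^e$ grows, combined with the monotonicity property of Koszul grade under ideal containment recalled in section~\ref{sec:GenDepth}. With the identity $\mathfrak{m}^\infty = \bigcup_{e \in \mathbb{N}} \mathfrak{m}^{[1/q]}R^\infty$ in hand, the lemma reduces to the compactness-style observation that a supremum over finite subsets of a directed union of ideals can be computed stage by stage.

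First I would verify this set-theoretic identity. The containment $\supseteq$ is immediate: for $x \in \mathfrak{m}$ we have $(x^{1/q})^q = x \in \mathfrak{m} \subseteq \mathfrak{m}^\infty$, so $x^{1/q} \in \mathfrak{m}^\infty$ by primality of $\mathfrak{m}^\infty$. For $\subseteq$, any $y \in \mathfrak{m}^\infty$ lies in some $R^{1/q}$ because $R^\infty = \bigcup_q R^{1/q}$; then $y^q \in R \cap \mathfrak{m}^\infty = \mathfrak{m}$ by the contraction correspondence, so $y$ is a $q$-th root of an element of $\mathfrak{m}$ and hence $y \in \mathfrak{m}^{[1/q]}R^\infty$.

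Next I would establish the two inequalities. For $\leq$, recall that $\kdepth_{R^\infty}(R^\infty \otimes_R M) = \kgr_{R^\infty}(\mathfrak{m}^\infty, R^\infty \otimes_R M)$ is by definition a supremum of $\kgr$-values over finite sequences $\mbf{y} \subset \mathfrak{m}^\infty$. Any such $\mbf{y}$ consists of finitely many elements, so it sits inside a single $\mathfrak{m}^{[1/q]}R^\infty$ for $q = p^e$ chosen sufficiently large (take any common upper bound on the denominators appearing in $\mbf{y}$). Ideal-monotonicity then yields $\kgr_{R^\infty}(\mbf{y}; R^\infty \otimes_R M) \leq \kgr_{R^\infty}(\mathfrak{m}^{[1/q]}R^\infty, R^\infty \otimes_R M)$, and taking the supremum over $\mbf{y}$ gives the desired bound. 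The reverse inequality $\geq$ is immediate from $\mathfrak{m}^{[1/q]}R^\infty \subseteq \mathfrak{m}^\infty$ and the same monotonicity.

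I do not anticipate any serious obstacle: the proof is essentially a definition chase once the nested-union identity is in place. The only mild subtlety is the interpretation of the right-hand side—the supremum should be read as ranging over all $e \in \mathbb{N}$ (with $q=p^e$), so that every finite sequence in $\mathfrak{m}^\infty$ can eventually be absorbed into some stage of the union, making the stage-by-stage reduction valid.
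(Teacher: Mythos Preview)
Your proposal is correct and takes essentially the same approach as the paper: both arguments use monotonicity of Koszul grade together with the observation that any finitely generated sub-ideal (equivalently, any finite sequence) in $\mathfrak{m}^\infty$ is contained in some $\mathfrak{m}^{[1/q]}R^\infty$ for $q$ large enough. Your version is slightly more explicit in spelling out the nested-union identity $\mathfrak{m}^\infty = \bigcup_e \mathfrak{m}^{[1/q]}R^\infty$ and the reverse inequality, while the paper leaves these implicit, but the substance is identical.
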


\begin{proof}
\indent We know that $\kdepth_{R^\infty} (R^\infty \otimes_R M) =$ sup$\lbrace \kgr_{R^\infty} \big(I, (R^\infty \otimes_R M) \big) \rbrace$, where $I \subset R^\infty$ is \emph{any} finitely generated sub-ideal.  We therefore must show that the ideals $\mathfrak{m}^{[\frac{1}{q}]}R^\infty$ determine the Koszul depth. 

\indent Recall for ideals $I \subseteq J \subset R^\infty$, we have $\kgr_{R^\infty} (I,M) \leq \kgr_{R^\infty} (J,M)$.  

\indent Let $I = (y_1^{\frac{1}{q_1}}, \ldots, y_n^{\frac{1}{q_n}}) \subset R^\infty$ be a finitely generated ideal.  Then $I \subseteq \mathfrak{m}^\infty$ since $(R^\infty,\mathfrak{m}^\infty)$ is a local ring.  Letting $q \geq \text{max} \lbrace q_i \rbrace$, we see that $I \subseteq \mathfrak{m}^{[\frac{1}{q}]}R^\infty$.  Hence for \emph{any} finitely generated $I \subset R^\infty$, we have $\kgr_{R^\infty} (I,M) \leq \kgr_{R^\infty} (\mathfrak{m}^{[\frac{1}{q}]}R^\infty,M) \big)$ for $q \gg 0$, since it is a sub-ideal.  Therefore, ideals of the form $\mathfrak{m}^{[\frac{1}{q}]}R^\infty$ achieve sup$\lbrace \kgr_{R^\infty} \big(I, (R^\infty \otimes_R M) \big) \rbrace$. 
\end{proof}

\begin{theorem}
Let $(R,\mathfrak{m})$ be an F-pure Noetherian local ring of characteristic $p > 0$, let $R^\infty$ be its perfect closure, and let $M$ be a finitely generated $R$-module.  Then $\kdepth_{R^\infty} (R^\infty \otimes_R M)$ is finite, and more specifically

\begin{center}
$\kdepth_{R^\infty} (R^\infty \otimes_R M) = \sdepth_R (M)$
\end{center}
\end{theorem}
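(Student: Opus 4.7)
The plan is to use Lemma~\ref{lem:supkosz} to reduce the computation of $\kdepth_{R^\infty}(R^\infty \otimes_R M)$ to $\sup_q \kgr_{R^\infty}\big(\mathfrak{m}^{[1/q]}R^\infty, R^\infty \otimes_R M\big)$, and then to show that this supremum equals $d := \sdepth_R(M)$. Fix a generating set $\mathbf{y} = y_1, \ldots, y_m$ of $\mathfrak{m}$, so that $\mathbf{y}^{1/q}$ generates $\mathfrak{m}^{[1/q]}R^\infty$, and let $q = p^e$. Since Koszul homology commutes with filtered colimits and $R^\infty \otimes_R M = \underrightarrow{\lim} \ F^{e'}(M)$ for $e' \geq e$, one has $H_i\big(\mathbf{y}^{1/q}; R^\infty \otimes_R M\big) = \underrightarrow{\lim} \ H_i\big(\mathbf{y}^{1/q}; F^{e'}(M)\big)$. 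The key bookkeeping observation is that right action of $y_j^{1/q}$ on $F^{e'}(M)$ coincides with left action of $y_j^{q'/q}$, so as a complex of abelian groups $K_\bullet(\mathbf{y}^{1/q}; F^{e'}(M))$ under right $R^{1/q}$-action is identical to $K_\bullet(\mathbf{y}^{q'/q}; F^{e'}(M))$ under left $R$-action. Because $R$ is Noetherian, $F^{e'}(M)$ is finitely generated as a left $R$-module by Remark~\ref{rmk:freakout}, and $(\mathbf{y}^{q'/q})$ has radical $\mathfrak{m}$, classical Noetherian Koszul theory identifies this grade with $\depth_R\big(F^{e'}(M)\big)$, which stabilizes at $d$ for $e' \gg 0$. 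Thus the top non-vanishing Koszul homology of $K_\bullet(\mathbf{y}^{1/q}; F^{e'}(M))$ sits at index $m-d$ for $e' \gg 0$, and passing to the colimit gives $H_i(\mathbf{y}^{1/q}; R^\infty \otimes_R M) = 0$ for $i > m-d$, whence $\kgr_{R^\infty}(\mathbf{y}^{1/q}, R^\infty \otimes_R M) \geq d$.

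The main obstacle is to show that $H_{m-d}(\mathbf{y}^{1/q}; R^\infty \otimes_R M) \neq 0$, so that this Koszul grade is exactly $d$; a priori the colimit of non-zero top Koszul homologies could collapse under the transition maps, and some extra control is needed. The plan here is to apply the equational criterion for purity. By $F$-purity, $R^{1/q'} \hookrightarrow R^\infty$ is a pure $R^{1/q'}$-module inclusion for every $q'$, and tensoring this map over $R^{1/q'}$ with $F^{e'}(M)$ exhibits $F^{e'}(M) \hookrightarrow R^\infty \otimes_R M$ as a pure $R^{1/q'}$-module (and hence pure $R^{1/q}$-module) inclusion, for $q \leq q'$. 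Now suppose $z \in K_{m-d}(\mathbf{y}^{1/q}; F^{e'}(M))$ is a Koszul cycle whose image in $R^\infty \otimes_R M$ is a boundary $d^\infty_{m-d+1}(w)$ with $w \in (R^\infty \otimes_R M)^{\binom{m}{m-d+1}}$. Because the Koszul differential's matrix has entries in $R^{1/q}$, this is a finite system of $R^{1/q}$-linear equations with right-hand side in $F^{e'}(M)^{\binom{m}{m-d}}$ and a solution in $(R^\infty \otimes_R M)^{\binom{m}{m-d+1}}$; the equational criterion for purity then supplies a solution $w' \in F^{e'}(M)^{\binom{m}{m-d+1}}$, so $z$ was already a boundary in $F^{e'}(M)$. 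This proves that the natural map $H_{m-d}(\mathbf{y}^{1/q}; F^{e'}(M)) \hookrightarrow H_{m-d}(\mathbf{y}^{1/q}; R^\infty \otimes_R M)$ is injective, and since the source is non-zero for $e' \gg 0$, so is the target.

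Finally, to upgrade this to $\kgr_{R^\infty}\big(\mathfrak{m}^{[1/q]}R^\infty, R^\infty \otimes_R M\big) = d$ rather than merely $\geq d$, I would repeat the analysis for any finite subset $\mathbf{z} = z_1^{1/q}, \ldots, z_n^{1/q} \subset \mathfrak{m}^{[1/q]}R^\infty$ with $z_j \in \mathfrak{m}$: the right/left identification turns $K_\bullet(\mathbf{z}; F^{e'}(M))$ into a Koszul complex whose grade equals $\depth_R\big((z_1, \ldots, z_n), F^{e'}(M)\big)$, since all the ideals $(z_1^{q'/q}, \ldots, z_n^{q'/q})$ have the same radical; by Lemma~\ref{lem:submodreg} (applied to $F^{e'}(M)$ in place of $M$) this depth is non-increasing in $e'$ and stabilizes to some $k \leq d$. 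Combining the vanishing and the purity injection then yields $\kgr_{R^\infty}(\mathbf{z}, R^\infty \otimes_R M) = k \leq d$. Taking the supremum over $\mathbf{z}$ and $q$, Lemma~\ref{lem:supkosz} concludes $\kdepth_{R^\infty}(R^\infty \otimes_R M) = d = \sdepth_R(M)$.
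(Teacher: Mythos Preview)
Your proof is correct and rests on the same core ingredients as the paper's: Lemma~\ref{lem:supkosz} to reduce to the ideals $\mathfrak{m}^{[1/q]}R^\infty$, the identification of right $\mathbf{y}^{1/q}$-action on $F^{e'}(M)$ with left $\mathbf{y}^{q'/q}$-action, and $F$-purity to transfer Koszul homology information between $F^{e'}(M)$ and $R^\infty \otimes_R M$. The packaging differs: the paper proves the two inequalities separately (the $\geq$ direction on a single generating set $\mathbf{x}$ of $\mathfrak{m}$ by lifting cycles from $F^e(M)$, the $\leq$ direction on $\mathbf{x}^{[1/q]}$ via the purity injection $\text{coker}(d_{j+1}^q) \hookrightarrow \text{coker}(d_{j+1}^q) \otimes_R R^\infty$), whereas you compute the Koszul grade exactly for each $q$ by realizing $H_i(\mathbf{y}^{1/q}; R^\infty \otimes_R M)$ as a filtered colimit and then invoking the equational criterion for purity to obtain injectivity on $H_{m-d}$. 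Your cokernel-free formulation via the equational criterion and the paper's cokernel argument are equivalent manifestations of purity; your colimit organization is arguably a bit cleaner.

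One remark: your third paragraph is unnecessary. Since $\mathbf{y}^{1/q}$ generates the finitely generated ideal $\mathfrak{m}^{[1/q]}R^\infty$, the standard fact that the Koszul grade of a finitely generated ideal equals the Koszul grade on any generating sequence (valid over any commutative ring, by adding redundant generators one at a time and using the resulting split long exact sequence) already gives $\kgr_{R^\infty}(\mathfrak{m}^{[1/q]}R^\infty, R^\infty \otimes_R M) = \kgr_{R^\infty}(\mathbf{y}^{1/q}, R^\infty \otimes_R M) = d$ from your first two paragraphs. Moreover, as written your third paragraph only treats finite subsets of the special form $z_1^{1/q},\ldots,z_n^{1/q}$ with $z_j \in \mathfrak{m}$, not arbitrary finite subsets of $\mathfrak{m}^{[1/q]}R^\infty$, so it would not by itself close the gap without appealing to that same standard fact.
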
 

\begin{proof}
To show $\kdepth_{R^\infty} (R^\infty \otimes_R M) \geq \sdepth_R (M)$, let $\mbf{x} = x_1, \ldots, x_n$ be minimal system of generators for $\mathfrak{m}$.  We claim that we have the comparison $\kgr_{R^\infty} \big((\mbf{x})R^\infty, (R^\infty \otimes_R M) \big) \geq \sdepth_R (M)$, which will prove the inequality, since $\kdepth_{R^\infty} (R^\infty \otimes_R M) = \kgr_{R^\infty} \big(\mathfrak{m}^\infty, (R^\infty \otimes_R M) \big) \geq \kgr_{R^\infty} \big((\mbf{x})R^\infty, (R^\infty \otimes_R M) \big)$.

\indent To prove this claim, let $\depth_R \big( F^e(M) \big) \geq d$ for all $e$, and let $j > n - d$.  Fix $e$.  Recall that since $R$ is Noetherian, $\kdepth_R (F^e(M)) = \depth_R (F^e(M))$.  Thus $H_j \big( \mbf{x}; F^e (M) \big) = 0$, and we must also show $H_j \big( \mbf{x}; (R^\infty \otimes_R M) \big) = 0$.  Fix $z_\infty \in \text{ker}(d_j^{\infty})$.  Then $z_e \in \text{ker}(d_j^{q})$.  By exactness at this $j$-th homology group, there exists a non-zero $y_e \in \big( F^e(M) \big)^{n \choose j+1}$ such that $d_{j+1}^q(y_e) = z_e$.  By $F$-purity of $R$, we have an injection $F^e(M)^{n \choose j+1} \hookrightarrow (R^\infty \otimes_R M)^{n \choose j+1}$, hence the image $y_\infty$ in $(R^\infty \otimes_R M)^{n \choose j+1}$ is non-zero.  But now $d_{j+1}^{\infty}(y_\infty) = z_{\infty}$. 

\indent Since $z_\infty \in \text{ker}(d_j^{\infty})$ was chosen arbitrarily, therefore $\text{ker}(d_j^{\infty}) = \text{image}(d_{j+1}^{\infty})$, and $H_j \big( \mbf{x}; (R^\infty \otimes_R M) \big) = 0$.

\indent To show the reverse inequality, again let $\mbf{x} = x_1, \ldots, x_n$ be minimal system of generators for $\mathfrak{m}$, and recall  by lemma~\ref{lem:supkosz}, $\kdepth_{R^\infty} (R^\infty \otimes_R M) = \sup \lbrace \kgr_{R^\infty} \big((\mbf{x})^{[\frac{1}{q}]}R^\infty, (R^\infty \otimes_R M) \big) \rbrace$.  We claim that for any $e$, $\kgr_{R^\infty} \big((\mbf{x})^{[\frac{1}{q}]}R^\infty, (R^\infty \otimes_R M) \big) \leq \depth_R \big( F^e(M) \big)$. Hence we know $\kdepth_{R^\infty} (R^\infty \otimes_R M)$ is finite, and since the statement is true for $e \gg 0$, the inequality is true.

\indent To prove this claim, fix $e$.  Let $\kgr_{R^\infty} \big((\mbf{x})^{[\frac{1}{q}]}R^\infty, (R^\infty \otimes_R M) \big) = d$, and $j > n - d$, whereby $H_j \big( \mbf{x}^{[\frac{1}{q}]}; (R^\infty \otimes_R M) \big) = 0$.  We must show that $H_j \big( \mbf{x}; F^e(M) \big) = 0$ as well, where $\mbf{x}$ acts on the left, which is equivalent to right action by $\mbf{x}^{\frac{1}{q}} \in R^{\frac{1}{q}}$.  Therefore, we can equivalently ask whether $H_j \big( \mbf{x}^{[\frac{1}{q}]}; F^e(M) \big) = 0$ via right action.  Fix $z_e \in \text{ker}(d_j^q)$, whereby $z_\infty \in \text{ker}(d_j^\infty)$.  Then $\overline{z_\infty} = 0 \in \text{coker}(d_{j+1}^\infty)$ by exactness of $H_{\bullet} \big( \mbf{x}^{\frac{1}{q}}; (R^\infty \otimes_R M) \big)$ at the $j$-th position.  But $\text{coker}(d_{j+1}^\infty) = \text{coker}(d_{j+1}^q \otimes_R 1_{R^\infty}) = \text{coker}(d_{j+1}^q) \otimes_R R^\infty$, and since $R$ is $F$-pure we have an injection $\text{coker}(d_{j+1}^q) \hookrightarrow \text{coker}(d_{j+1}^q) \otimes_R R^\infty$.  Since $\overline{z_{\infty}} = 0$ in the image of this map, therefore its preimage $\overline{z_e} = 0$ as well.

\indent Since $z_e \in \text{ker}(d_j^q)$ was chosen arbitrarily, $H_{\bullet} \big( \mbf{x}; F^e (M) \big)$ is exact at the $j$-th position.
\end{proof}

\indent Combining these two results, for $R$ and $M$ as above:

\begin{center}
$\kdepth_{R^\infty} (R^\infty \otimes_R M) = \sdepth_R (M) \geq \cdepth_{R^\infty} (R^\infty \otimes_R M)$
\end{center}

\noindent In order to find a condition under which this last inequality is an equality, currently we require a further supposition on our rings.  Namely, we require rings which either satisfy countable prime avoidance, or a specific application of the prime avoidance lemma is sufficient.

\begin{defn}
A Noetherian ring $R$ satisfies the countable prime avoidance lemma if for any ideal $I \subset R$, and countable collection of prime ideals $\lbrace P_e \rbrace$, if $I \subseteq \bigcup_{e \in \mathbb{N}} P_e$, then $I \subseteq P_e$ for some $e$.  
\end{defn}

\begin{rmk}
Countable prime avoidance is satisfied by any complete Noetherian local ring, or any ring which contains uncountably many elements $\lbrace u_\lambda \rbrace_{\lambda \in \Lambda}$ for which $u_\lambda - u_\mu$ is a unit for $\lambda \neq \mu$.  In particular, any ring which contains an uncountable field is an example of this second condition (see \cite[Lemma 13.2]{LeWe-CMR}, \cite[Lemma 3]{BurchCPA}, \cite{RsPv-BCT}, \cite{HHexponent}).
\end{rmk}

\begin{theorem}\label{thm:twohyps}
 Let $(R,\mathfrak{m})$ be an F-pure Noetherian local ring of characteristic $p > 0$, and let $M$ be a finitely generated $R$-module.  If either of the following two conditions hold:

\begin{enumerate}
\item $R$ satisfies countable prime avoidance
\item $\ds \bigcup \bigcup_{e} \Ass_R \big( F^e(M) / F^e(M) \mbf{y} \big)$ contains finitely many prime ideals, where $\bf y$ $\subset R$ is a maximal $(R^\infty \otimes_R M)$ sequence in $R^\infty$
\end{enumerate}

then,

\begin{center}
$\sdepth_R (M) = \cdepth_{R^\infty} (R^\infty \otimes_R M)$
\end{center}
\end{theorem}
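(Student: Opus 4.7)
The inequality $\sdepth_R(M) \geq \cdepth_{R^\infty}(R^\infty \otimes_R M)$ has already been shown in the preceding subsection, so the task is only the reverse inequality. My plan is to exhibit a $(R^\infty \otimes_R M)$-regular sequence of length $d := \sdepth_R(M)$ lying inside $R \subseteq R^\infty$. By lemma~\ref{lem:infreg}, a sequence $\mbf{x} \subseteq R$ is $(R^\infty \otimes_R M)$-regular if and only if it is $F^e(M)$-regular for every $e \in \mathbb{N}$, so it suffices to produce a single sequence of length $d$ that is regular on every Frobenius iterate simultaneously. Note also that for a single element $y^{1/q} \in R^\infty$, regularity on $(R^\infty \otimes_R M)$ is equivalent to regularity of $y \in R$ on the same module, since powers of non-zerodivisors are non-zerodivisors, so nothing is lost by restricting to sequences drawn from $R$.

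I would proceed by induction on length. Suppose $y_1, \ldots, y_{k-1} \in R$ has been chosen so as to be $F^e(M)$-regular for every $e$, with $k-1 < d$. Since $\depth_R(F^e(M))$ is non-increasing in $e$ by $F$-purity and stabilizes at $d$, we have $\depth_R(F^e(M)) \geq d$ for all $e$, whence
\[
\depth_R\bigl( F^e(M)/(y_1, \ldots, y_{k-1}) F^e(M) \bigr) \geq d - (k-1) \geq 1
\]
for every $e$. In particular $\mathfrak{m}$ is not associated to any of these quotients. Setting
\[
\mathcal{A} := \bigcup_{e \in \mathbb{N}} \Ass_R\bigl( F^e(M)/(y_1, \ldots, y_{k-1}) F^e(M) \bigr),
\]
any element $y_k \in \mathfrak{m} \setminus \bigcup_{P \in \mathcal{A}} P$ will extend the regular sequence, completing the inductive step.

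Under hypothesis (1), $\mathcal{A}$ is a countable union of finite sets of primes, none of which contains $\mathfrak{m}$, so countable prime avoidance directly yields the required $y_k$, and the induction closes after $d$ steps. Under hypothesis (2), I would instead argue by contradiction: take a maximal $(R^\infty \otimes_R M)$-regular sequence $\mbf{y}$, which by the reduction above may be taken inside $R$. If $|\mbf{y}| < d$, then by hypothesis $\bigcup_e \Ass_R(F^e(M)/F^e(M)\mbf{y})$ is a finite set of primes, and by the same depth computation $\mathfrak{m}$ avoids each of them, so ordinary (finite) prime avoidance supplies an element of $\mathfrak{m}$ extending $\mbf{y}$, contradicting its maximality.

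The main obstacle I anticipate is the uniformity in $e$: at each inductive step one must choose a \emph{single} element of $\mathfrak{m}$ that is regular on every $F^e(M)/(y_1, \ldots, y_{k-1}) F^e(M)$ at once, even though, as example~\ref{example:DecrDepth} shows, the associated primes of these quotients can genuinely accumulate as $e$ grows. Each of the two hypotheses is tailored precisely to overcome this cardinality issue, with hypothesis (1) accommodating the countable accumulation and hypothesis (2) restricting attention to situations where the union of associated primes is finite to begin with; both reduce the problem, via the depth bound and lemma~\ref{lem:infreg}, to a prime avoidance statement of the appropriate size.
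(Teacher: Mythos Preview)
Your proposal is correct and follows essentially the same strategy as the paper: both arguments hinge on lemma~\ref{lem:infreg} together with the depth bound $\depth_R\bigl(F^e(M)\bigr) \geq d$, and both reduce the problem to applying prime avoidance to the union $\bigcup_e \Ass_R\bigl(F^e(M)/F^e(M)\mathbf{y}\bigr)$, using the depth bound to ensure $\mathfrak{m}$ is not among these primes. The only organizational difference is that under hypothesis~(1) you construct the regular sequence inductively, whereas the paper handles both hypotheses uniformly by contradiction from a maximal sequence $\mathbf{y}$ (showing $\mathfrak{m} = \bigcup\bigcup_e \Ass_R\bigl(F^e(M)/F^e(M)\mathbf{y}\bigr)$ as sets and then invoking the appropriate form of prime avoidance to force $\mathfrak{m}$ to equal one of the associated primes); the two formulations are logically interchangeable.
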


\begin{proof}
\indent We have already shown that $\sdepth_R (M) \geq \cdepth_{R^\infty} (R^\infty \otimes_R M)$.  

\indent Conversely, suppose that $\sdepth_R (M) > \cdepth_{R^\infty} (R^\infty \otimes_R M) = d$.  Let $\mbf{y} = y_1, \ldots, y_d \subset R$ be a maximal $(R^\infty \otimes_R M)$ sequence.  Then for all $e$, $\depth_R \big( F^e(M) / F^e(M)\mbf{y} \big) > 0 = \cdepth_{R^\infty} \big( (R^\infty \otimes_R M) / (R^\infty \otimes_R M) \mbf{y} \big)$.  We claim that $\mathfrak{m} = \ds \bigcup \bigcup_{e} \Ass_R \big( F^e(M) / F^e(M) \mbf{y} \big)$.  Note that this union consists of at most countably many prime ideals since it is a countable union of finite sets. 

\indent Clearly the union is contained in $\mathfrak{m}$, since $(R, \mathfrak{m})$ is local.  Conversely, suppose that there exists some $z \in \mathfrak{m} \setminus \ds \bigcup \bigcup_{e} \Ass_R \big( F^e(M) / F^e(M) \mbf{y} \big)$.  Then $z$ is not contained in any associated prime of $F^e(M) / F^e(M) \mbf{y}$ for any $e$.  Hence $z$ is regular over $F^e(M) / F^e(M) \mbf{y}$ for all $e$.  Then $\lbrace \mbf{y},z \rbrace$ is a right regular $F^e(M)$-sequence for all $e$, and hence it is $(R^\infty \otimes_R M)$-regular by lemma~\ref{lem:infreg}.  But this fact contradicts the maximality of $\mbf{y}$. 

\indent Now suppose either condition of the theorem holds.  Since $\mathfrak{m}$ is contained in this union of prime ideals, then $\mathfrak{m} \subseteq P$ for some $P \in \Ass_R \big( F^e(M) / F^e(M) \mbf{y} \big)$ and for some $e$.  But since $\mathfrak{m}$ is maximal, $\mathfrak{m} = P$.  We now have the maximal ideal of $R$ shown to be an associated prime of $F^e(M) / F^e(M) \mbf{y}$, and hence $\depth_R \big( F^e(M) / F^e(M) \mbf{y} \big) = 0$.  We have contradicted our assumption that $\depth_R \big( F^e(M) / F^e(M)\mbf{y} \big) > 0$.
\end{proof}

\section*{Acknowledgments}

\indent The author would like to thank his thesis advisor Neil Epstein for his guidance throughout this project.  And though our formal mentorship is over, he has still been incredibly generous with his time while proofreading rough drafts of this paper.  In this latest role, he can particularly be thanked for discussions of $F$-sequences, and the suggestion of lemma~\ref{lem:fseqann}.

\bibliographystyle{amsalpha}
\bibliography{rsch_refs}

\end{document}